\documentclass[draft, reqno]{amsart}
\usepackage[all]{xy}                        %

\CompileMatrices                            

\UseTips                                    

\input xypic
\usepackage[bookmarks=true]{hyperref}       

\usepackage{amssymb,latexsym,amsmath,amscd}
\usepackage{xspace}
\usepackage{enumerate}
\usepackage{graphicx}
\usepackage{vmargin}
\usepackage{todonotes}
\usepackage{xcolor}


\reversemarginpar

\vfuzz2pt 
\hfuzz2pt 


\theoremstyle{plain}
\newtheorem{theorem}{Theorem}[section]
\newtheorem*{theorem*}{Theorem}
\newtheorem{proposition}[theorem]{Proposition}
\newtheorem{corollary}[theorem]{Corollary}
\newtheorem{lemma}[theorem]{Lemma}

\newtheorem{claim}{Claim}

\theoremstyle{definition}
\newtheorem{definition}[theorem]{Definition}

\newtheorem{notation}[theorem]{Notation}

\newtheorem{remark}[theorem]{Remark}

\newcommand{\enm}[1]{\ensuremath{#1}}          %

\newcommand{\cal}[1]{\mathcal{#1}}

\newcommand{\NN}{\enm{\mathbb{N}}}

\newcommand{\PP}{\enm{\mathbb{P}}}

\newcommand{\Ii}{\enm{\cal{I}}}

\newcommand{\Oo}{\enm{\cal{O}}}

\newcommand{\Ss}{\enm{\cal{S}}}

\renewcommand{\phi}{\varphi}
\renewcommand{\theta}{\vartheta}
\renewcommand{\epsilon}{\varepsilon}

\begin{document}

\title[Third secant variety of Segre-Veronese]{On the ranks of the third secant variety of Segre-Veronese embeddings}

\author[E. Ballico]{Edoardo Ballico}
\address[Edoardo Ballico]{Dipartimento di Matematica,  Univ. Trento, Italy}
\email{edoardo.ballico@unitn.it }

\author[A. Bernardi]{Alessandra Bernardi}
\address[Alessandra Bernardi]{Dipartimento di Matematica, Univ.  Trento,  Italy}
\email{alessandra.bernardi@unitn.it}

\maketitle

\begin{abstract} We give an upper bound for the rank of the border rank 3 partially symmetric tensors. In the special case of border rank 3 tensors $T\in V_1\otimes \cdots \otimes V_k$  (Segre case) we can show that all ranks among 3 and $k-1$ arise and if $\dim V_i \geq 3$ for all $i$'s, then also all the ranks between $k$ and $2k-1$ arise.
\end{abstract}

\section*{Introduction}

In this paper we deal with the problem of finding a bound for the minimum integer $r(T)$ needed to write a given tensor $T$ as a linear combination of $r(T)$  decomposable tensors. Such a minimum number  is now known under the name of \emph{rank of $T$}. In order to be as general as possible we will consider the tensor $T$ to be partially symmetric, i.e.
\begin{equation}\label{PST}T\in S^{d_1} V_1 \otimes \cdots  \otimes S^{d_k} V_k \end{equation}
where the $d_i$'s are positive integers  and  $V_i$'s are finte dimensional vector spaces defined over an algebraically closed field $K$. The decomposition that will give us the rank of such a tensor will be of the following type:
\begin{equation}\label{rankT}T=\sum_{i=1}^{r(T)} \lambda_i v_{1,i}^{\otimes d_1} \otimes \cdots \otimes v_{k,i}^{\otimes d_k}\end{equation}
where  $\lambda_i \in K$  and $v_{j,i}\in V_j$, $i=1, \ldots, r(T)$ and  $j=1, \ldots , k$.

Another very interesting and useful notion of ``~rank~'' is the minimum $r(T)$ such that a tensor $T$ can be written as a limit of a sequence of rank $r(T)$ tensors. This last integer is called the {\emph{border rank of $T$}} (Definition \ref{DefBorderRank}) and clearly it can be strictly smaller than the rank of $T$ (Remark \ref{br<r}). It has become a common technique to fix a class of tensors of given border rank  and then study all the possible ranks arising in that family (cf. \cite{bgi,bb2,bl2,lt,bdhm}). The rank of tensors of border rank 2 is well known (cf. \cite{bgi} for symmetric tensors, \cite{bb1} for tensors without any symmetry, \cite{bb5} for  partially symmetric tensors). The first not completely classified case is the one of border rank 3 tensors. In \cite[Theorem 37]{bgi} the rank of any symmetric order $d$ tensor of border rank 3 has been computed and it is shown that  the maximum rank reached is $2d-1$. In the present paper, Theorem \ref{i1}, we prove that the rank of partially symmetric tensors $T$ as in (\ref{PST}) of border rank 3 can be at most $$r(T)\leq-1 +\sum _{i=1}^{k} 2d_i.$$
In \cite[Theorem 1.8]{bl2} J. Buczy\'nski and J.M. Landsberg described the cases in which the inequality in Theorem \ref{i1} is an equality: when $k=3$ and $d_1=d_2=d_3=1$  they show that there is an element  
of rank 5.
All ranks for border rank 3 partially symmetric tensors 
 are described in \cite{bl1} when $k=3$, $d_1=d_2=d_3=1$ and $n_i=1$ for at least one integer $i$. Therefore our Theorem \ref{i1} is the natural extension of the two extreme cases (tensors without any symmetry  where $d_i=1$ for all $i=1, \ldots , k$ and totally symmetric case where $k=1$). 

In the  special case of tensors without any symmetry, i.e.
$T\in V_1\otimes \cdots \otimes V_k$, we will be able to show, in Theorem \ref{dd1},  that all ranks among 3 and $k-1$ arise and if $\dim V_i \geq 3$ for all $i$'s then also all the ranks between $k$ and $2k-1$ arise, therefore this result is sharp (cf. Remark \ref{uu6}).
In the proof of this theorem we  will describe the structure of our solutions: they are all obtained  from $(\PP^2)^k$ taking as a border scheme a degree
$3$ connected curvilinear scheme. The two critical cases are rank $k-1$ on $(\mathbb{P}^1)^k$ and rank $2k-1$ on $(\mathbb{P}^2)^k$ and the other cases can be deduced from one of these two.

In \cite{bb4} we defined the notion of curvilinear rank for symmetric tensors to be the minimum length of a curvilinear scheme whose span contains a given symmetric tensor. We can extend some of the ideas in \cite{bb4} and some of those used in our proof of Theorem \ref{i1} to the case of partially symmetric tensors
 and prove that, if a partially symmetric tensor is contained in the span of a special degree $c$ curvilinear scheme with $\alpha$ components, the rank of this tensor is bounded by $2\alpha +c\left(-1+\sum _{i=1}^{k} d_i\right)$ (cf. Theorem \ref{i2}).

\section{Notation, Definitions and Statements}
In this section we introduce the basic geometric tools that we will use all along the paper.
\begin{notation}\label{Setting}
We indicate with
 $$\nu_{n_1^1, \ldots , n_k^1} : \PP^{n_1}\times \cdots \times \PP ^{n_k}\to \PP^M, \hbox{ where } M=\left(\prod _{i=1}^{k}(n_i+1)\right)-1$$ the \emph{Segre embedding} of the multi-projective
space $\PP^{n_1}\times \cdots \times \PP ^{n_k}$, i.e. the embedding of $\PP^{n_1}\times \cdots \times \PP ^{n_k}$ by the complete linear system $|\Oo _{\PP^{n_1}\times \cdots \times \PP ^{n_k}}(1,\dots ,1)|$. 

\bigskip

For each $i\in \{1,\dots ,k\}$ 
let $$\pi _i: {\PP^{n_1}\times \cdots \times \PP ^{n_k}}\to \PP^{n_i}$$
denote the projection onto the $i$-th factor.

\bigskip 

Let  
$$\tau _i: {\PP^{n_1}\times \cdots \times \PP ^{n_k}}\to \PP^{n_1}\times \cdots\times \hat \PP^{n_{i}}\times\cdots \times \PP^{n_k} $$ 
denote the  projection onto all the factors different from $\PP^{n_i}$.  

\bigskip

Let $\epsilon _i\in \NN^k$ be the $k$-tuple of integers
$\epsilon_i=(0,\dots, 1, \dots ,0)$ with  $1$ only in the  $i$-th position.
We say that \emph{a curve $C\subset {\PP^{n_1}\times \cdots \times \PP ^{n_k}}$ has multi-degree} $(a_1,\dots ,a_k)$ if for all $i=1,\dots ,k$ the line bundle $\Oo _C(\epsilon _i)$ has degree $a_i$.

\bigskip 

We say that \emph{a morphism $h: \PP^1\to {\PP^{n_1}\times \cdots \times \PP ^{n_k}}$ has multi-degree} $(a_1,\dots ,a_k)$ if, for all $i=1, \ldots , k$: 
$$h^\ast \left(\Oo _{\PP^{n_1}\times \cdots \times \PP ^{n_k}}(\epsilon _i)\right) \cong \Oo _{\PP^1}(a_i).$$

\bigskip

Let $$\nu _{n_1^{d_1},\dots ,n_k^{d_k}}: {\PP^{n_1}\times \cdots \times \PP ^{n_k}} \to \PP^N, \hbox{ where } N=\left(\prod _{i=1}^{k} \binom{d_i+n_i}{n_i}\right)-1$$ denote the Segre-Veronese embedding of ${\PP^{n_1}\times \cdots \times \PP ^{n_k}}$ of multi-degree $(d_1,\dots ,d_k)$ and define 
$$X:= \nu _{n_1^{d_1},\dots ,n_k^{d_k}}({\PP^{n_1}\times \cdots \times \PP ^{n_k}})$$ to be the \emph{Segre-Veronese variety}.
\end{notation}

The name ``~Segre-Veronese~" is classically due to the fact that when the $d_i$'s are all equal to 1, then the variety $X$ is called ``~Segre variety~"; while when $k=1$ then $X$ is known to be a ``~Veronese variety~".

\begin{remark} An element of $X$ is the projective class of a decomposable partially symmetric tensor $T\in S^{d_1}V_1 \otimes \cdots \otimes S^{d_k}V_k$ where $\PP(V_i)=\mathbb{P}^{n_i}$. More precisely $p\in X$ if there exists $T\in S^{d_1}V_1 \otimes \cdots \otimes S^{d_k}V_k$ such that  $p=[T]=[v_{1,i}^{\otimes d_1} \otimes \cdots \otimes v_{k,i}^{\otimes d_k}]$ with $[v_{j,i}]\in \mathbb{P}^{n_i}$.
\end{remark} 

\begin{definition}\label{SecantTangent} The \emph{$s$-th secant variety} $\sigma_s(X)$  of  $X$ is the Zariski closure of the union of all $s$-secant $\mathbb{P}^{s-1}$  to $X$. The \emph{tangential variety} $\tau(X)$ is the  Zariski closure of the union of all tangent lines to $X$.
\end{definition}

Observe that
\begin{equation}\label{filtrationSec}
X=\sigma_1(X)\subset \tau(X) \subset \sigma_2(X) \subset \cdots \subset \sigma_{s-1}(X)\subset \sigma_s(X) \subset \cdots \subset \mathbb{P}^N.
\end{equation}

\begin{definition}\label{DefRank}The \emph{$X$-rank} $r_X(p)$ of an element $p\in \mathbb{P}^N$ is the minimum integer $s$ such that there exist a $\mathbb{P}^{s-1}\subset \mathbb{P}^N$ which is $s$-secant  to $X$ and containing $p$. 

We indicate with $\mathcal{S}(p)$ the set of sets of points of $\PP^{n_1}\times \cdots \times \PP ^{n_k}$ ``~evincing~" the $X$-rank of $p\in \PP^N$, i.e. 
{\small{$$\mathcal{S}(p):=\left\{\{x_1, \ldots , x_s\}\subset \PP^{n_1}\times \cdots \times \PP ^{n_k} \, | \, r_X(p)=s \hbox{ and } p\in\langle \nu_{n_1^{d_1}, \ldots , n_k^{d_k}}(x_1), \ldots , \nu_{n_1^{d_1}, \ldots , n_k^{d_k}}(x_s)\rangle \right\}.$$}}
\end{definition}
\begin{definition}\label{DefBorderRank}
The \emph{$X$-border rank} $br_X(p)$ of an element $p\in \mathbb{P}^N$ is the minimum integer $s$ such that $p\in \sigma_s(X)$.
\end{definition}

\begin{remark}\label{br<r} For any $p\in \mathbb{P}^N=\mathbb{P}(S^{d_1}V_1 \otimes \cdots \otimes S^{d_k}V_k)$ we obviously have that $br_X(p)\leq r_X(P)$. In fact  $p\in \mathbb{P}^N$ of rank $r$ is such that there exist a tensor $T\in S^{d_1}V_1 \otimes \cdots \otimes S^{d_k}V_k$ that can be minimally written as in (\ref{rankT}); while an element  $p\in \mathbb{P}^N$ has border rank $s$ if and only if there exist a sequence of rank $r$ tensors $T_i\in S^{d_1}V_1 \otimes \cdots \otimes S^{d_k}V_k$ such that  $\lim_{i\rightarrow \infty} T_i=T$ and $p=[T]$.

\end{remark} 
The first result that we prove in Section \ref{Sectioni1} is an upper bound for the rank of points in $\sigma _3(X)$.
\begin{theorem}\label{i1}
The rank of an element $p\in \sigma _3(X)$ is $r_X({p})\le -1 +\sum _{i=1}^{k} 2d_i$.
\end{theorem}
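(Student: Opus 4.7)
The strategy is to bound $r_X(p)$ by embedding the problem into a rational normal curve inside $X$ and invoking Sylvester's theorem, the ingredient behind the $2d-1$ bound of \cite[Theorem 37]{bgi} recalled in the introduction. Since every zero-dimensional scheme of length at most $3$ is smoothable, the hypothesis $p\in \sigma_3(X)$ gives a subscheme $Z\subset \PP^{n_1}\times\cdots\times\PP^{n_k}$ with $\deg Z\le 3$ and $p\in \langle\nu_{d_1,\dots,d_k}(Z)\rangle$. Minimising $Z$, one of three structures occurs: $Z$ is reduced (three distinct points), or $Z$ is the union of a length-$2$ scheme at a point $q$ with at most one further reduced point, or $Z$ is a curvilinear length-$3$ scheme supported at a single point.

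The reduced case gives $r_X(p)\le 3$. In the second case, any point of a tangent line to $X$ at $\nu(q)$ has the form $\nu(q)+t\sum_{i=1}^k c_iv_1^{d_1}\otimes\cdots\otimes v_i^{d_i-1}w_i\otimes\cdots\otimes v_k^{d_k}$; absorbing $\nu(q)$ via $v_1^{d_1}+tc_1v_1^{d_1-1}w_1=v_1^{d_1-1}(v_1+tc_1w_1)$ and using the Sylvester fact that $v^{d-1}u\in S^dV$ has symmetric rank $d$, this tangent-line point has $X$-rank at most $\sum_{i=1}^k d_i$. Adding at most one reduced component costs one further summand, so the second case yields $r_X(p)\le 1+\sum d_i\le -1+2\sum d_i$ whenever $\sum d_i\ge 2$ (the remaining value $\sum d_i=1$ forces $k=1$, $d_1=1$, $X=\PP^{n_1}$, where every point has rank $1$).

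The core case is $Z$ curvilinear of length $3$ supported at a single $q=(q_1,\dots,q_k)$. I would realise $Z$ inside a smooth rational curve $C\subset \PP^{n_1}\times\cdots\times\PP^{n_k}$ of multi-degree $(a_1,\dots,a_k)$ with every $a_i\le 2$. Parameterising $Z$ by a formal arc $\gamma\colon\op{Spec}K[t]/(t^3)\to \PP^{n_1}\times\cdots\times\PP^{n_k}$, I pick for each factor a morphism $h_i\colon\PP^1\to\PP^{n_i}$ whose $2$-jet at $0$ matches $\pi_i\circ\gamma$; this is possible with degree at most $2$ (constant when $\pi_i(Z)$ has length $1$, a line when length $2$ or length $3$ contained in a line of $\PP^{n_i}$, a conic otherwise). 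A generic such $h=(h_1,\dots,h_k)$ is a closed embedding, so $C:=h(\PP^1)$ contains $Z$, and the Veronese image $C':=\nu_{d_1,\dots,d_k}(C)\subset X$ is a rational normal curve of degree $D=\sum a_id_i\le 2\sum d_i$ spanning a $\PP^D\subset \PP^N$ containing the length-$3$ curvilinear subscheme $\nu(Z)$.

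To finish, the point $p\in \langle\nu(Z)\rangle\subset \PP^D$ lies in $\sigma_3(C')$ and is evinced on $C'$ by $\nu(Z)$. If $p\in \sigma_2(C')$ then $p\in \sigma_2(X)$ and the border-rank-$2$ bound gives $r_X(p)\le \sum d_i$; otherwise Sylvester's theorem applied to $C'$ (a rational normal curve of degree $D$, on which a point of border rank $3$ evinced by a curvilinear length-$3$ subscheme has rank at most $D-1$) gives $r_{C'}(p)\le D-1\le -1+2\sum d_i$. Since $C'\subset X$, in either sub-case $r_X(p)\le r_{C'}(p)\le -1+\sum_{i=1}^k 2d_i$, as required. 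The principal obstacle is the construction of $C$: one has to verify that a conic in $\PP^{n_i}$ can realise any prescribed curvilinear length-$3$ jet (straightforward in affine coordinates by picking quadratic local parameterisations), and that for generic $h$ the composition with $\nu_{d_1,\dots,d_k}$ is a non-degenerate morphism $\PP^1\to \PP^D$ of degree exactly $D$, which is a dimension count on the pull-back of $H^0(\PP^N,\Oo(1))$ into $H^0(\PP^1,\Oo(D))$.
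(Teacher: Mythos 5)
Your treatment of the three low-degree cases (three reduced points; a tangent vector plus a point; a connected curvilinear length-$3$ scheme realised on a rational curve of multi-degree at most $2$ in each factor, followed by Sylvester) is essentially the paper's argument for those cases. But the opening reduction is where the proof breaks: the claim that $p\in\sigma_3(X)$ forces the existence of a subscheme $Z$ of length at most $3$ with $p\in\langle\nu_{d_1,\dots,d_k}(Z)\rangle$ is false, and smoothability of length-$3$ schemes does not deliver it. Smoothability gives the inclusion ``span of a length-$3$ scheme $\Rightarrow$ point of $\sigma_3$''; the converse direction fails because $\sigma_3(X)$ is a closure, and in a degenerating family of triples of points the limit of the spans can be strictly larger than the span of the flat limit scheme (the limit scheme may fail to impose independent conditions). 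This is precisely the content of \cite[Theorem 1.2]{bl2}, whose classification contains a fourth case that your decomposition omits: $p$ may only be reachable as a point in the span of a degree-$4$ scheme $\Gamma=v\sqcup w$ with two connected degree-$2$ components whose supports span a line $L$ contained in $X$ (possible only when some $d_i=1$). Such points exist --- e.g.\ the rank-$5$ tensors of \cite[Theorem 1.8]{bl2} for $k=3$, $d_1=d_2=d_3=1$, which realise equality in the bound --- so the case cannot be discarded.

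Your rational-normal-curve/Sylvester machinery does not apply directly to that fourth case (the scheme is disconnected and of degree $4$, so it is not a curvilinear jet on a single $\PP^1$). The paper handles it separately: it writes each tangent vector $\tilde v$, $\tilde w$ inside a union of multi-degree-$\epsilon_j$ lines through its support, glues these to $L$ to obtain a reduced connected nodal curve $\tilde T$ of degree $\alpha\le -1+\sum_h 2d_h$ containing $\Gamma$, and invokes \cite[Proposition 4.1]{lt} to get $r_{\tilde T}(p)\le\alpha$. You would need to add this (or an equivalent) argument. Two smaller points: in your curvilinear case the image $\nu_{d_1,\dots,d_k}(C)$ need not span a $\PP^D$ with $D=\sum a_id_i$; the paper covers this by noting $r_C(p)\le\dim\langle C\rangle$ via \cite[Proposition 5.1]{lt}, so only the non-degenerate case needs Sylvester. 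And when $D\le 3$ one has $\sigma_2(C')=\langle C'\rangle$, which contradicts $p\notin\sigma_2(X)$ rather than giving a rank bound, so that subcase should be excluded explicitly.
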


In the case $d_i=1$ for all $i=1, \ldots ,k$, i.e.  if $X$ is the Segre variety, we fill in all low ranks with points of $\sigma _3(X)\setminus \sigma _2(X)$. In Section \ref{SectionSegre} we prove the following result.

{
\begin{theorem}\label{dd1}
Assume $k\ge 3$ and let $X\subset \PP^M$ be the Segre variety of $k$ factors as in Notation \ref{Setting}. Let $\alpha$ be the cardinality of the set $\{i\in \{1,\dots ,k\}\mid n_i\ge 2$\}.  For each $x\in \{3,\dots ,\alpha +k-1\}$ there is $p\in \sigma _3(X)\setminus \sigma _2(X)$
with $r_X({p}) =x$.
\end{theorem}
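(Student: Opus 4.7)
I would split the proof into two parts: (a) handle the maximal case $x=k-1$ by an explicit construction from a connected curvilinear border scheme in $(\PP^1)^k$ (this is the content of Proposition \ref{d1} referenced in the introduction); (b) deduce the intermediate cases $3\le x\le k-2$ by reducing to (a) with $k$ replaced by $x+1$.

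\textbf{Top rank.} For $k\ge 4$, let $C\subset(\PP^1)^k$ be the diagonal (or, up to automorphism of each factor, any smooth rational curve of multi-degree $(1,\ldots,1)$). Its Segre image $\nu(C)$ is a rational normal curve of degree $k$ in $\PP^M$. Fix $P\in C$ and let $Z\subset C$ be the unique connected length-$3$ curvilinear subscheme supported at $P$, so that $\langle \nu(Z)\rangle$ is an osculating $\PP^2$ to $\nu(C)$. A point $p\in\langle\nu(Z)\rangle$ that does not lie on the tangent line to $\nu(C)$ at $\nu(P)$ belongs to $\sigma_3(X)\setminus\sigma_2(X)$: the inclusion $p\in\sigma_3(X)$ is immediate from $\operatorname{length}(Z)=3$, while $p\notin\sigma_2(X)$ follows from a rigidity property of connected length-$3$ curvilinear schemes (any length-$2$ scheme whose span contained $p$ would have to be comparable to $Z$, which is ruled out by construction). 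The upper bound $r_X(p)\le k-1$ is obtained by writing $p$ explicitly: in affine coordinates centred at $P$, the tensor $T$ with $p=[T]$ is, up to scalar, $\tfrac{d^2}{dt^2}(e_1+te_2)^{\otimes k}\big|_{t=0}=2\sum_{1\le i<j\le k}e_2^{(i,j)}$ (with $e_2^{(i,j)}$ denoting $e_2$ in positions $i,j$ and $e_1$ elsewhere). As a symmetric tensor this represents, up to normalization, the binary form $x^{k-2}y^2\in K[x,y]_k$, whose Waring rank is $k-1$ by Sylvester's classical algorithm; this furnishes an explicit decomposition into $k-1$ indecomposable tensors. The lower bound $r_X(p)\ge k-1$ is the main obstacle: using that $T$ is symmetric under permutation of the factors and that symmetric rank coincides with tensor rank for binary symmetric tensors (Comas–Seiguer), the tensor rank equals the Waring rank $k-1$. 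A proof avoiding Comas–Seiguer would proceed by a flattening/catalecticant bound or by inductive contraction on $k$, exploiting the connectedness of $Z$ to forbid shorter decompositions.

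\textbf{Intermediate ranks.} For $3\le x\le k-2$ (so $k\ge 5$), fix lines $\ell_i\subset\PP^{n_i}$ for $1\le i\le x+1$ and points $q_j\in\PP^{n_j}$ for $x+2\le j\le k$, and let $Y\subset X$ be the Segre variety of $\ell_1\times\cdots\times\ell_{x+1}\times\{q_{x+2}\}\times\cdots\times\{q_k\}\cong(\PP^1)^{x+1}$. The top-rank construction applied with $x+1\ge 4$ factors produces a point $p\in\langle Y\rangle\subset\PP^M$ with $r_Y(p)=x$ and $p\in\sigma_3(Y)\setminus\sigma_2(Y)$. A standard retraction argument (choose linear retractions $V_i\to V_i'$ onto the $2$-dimensional subspace cut out by $\ell_i$ for $i\le x+1$ and onto $\langle q_j\rangle$ for $j>x+1$; the resulting tensor-product retraction is the identity on $\langle Y\rangle$ and sends any $X$-decomposition of $p$ to a $Y$-decomposition of the same or shorter length) shows that rank and border rank of $p$ agree for $X$ and $Y$. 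Hence $r_X(p)=x$ and $p\in\sigma_3(X)\setminus\sigma_2(X)$. The case $k=4$ only requires $x=3=k-1$ and is settled by the top-rank construction alone. The bulk of the technical work is therefore in Proposition \ref{d1}; the reduction step is essentially formal once ranks under sub-Segre inclusion are controlled.
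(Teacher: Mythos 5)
Your overall architecture (construct a degree-$k$ rational normal curve $\nu(C)$ through the Segre of $(\PP^1)^k$ via a multi-degree $(1,\dots,1)$ curve, get the upper bound $r_X(p)\le k-1$ from Sylvester applied to $x^{k-2}y^2$, then obtain the intermediate ranks by passing to $x+1$ factors and invoking Autarky) matches the paper's. But there is a genuine gap at the step you yourself flag as ``the main obstacle'': the lower bound $r_X(p)\ge k-1$. Your primary justification is that ``symmetric rank coincides with tensor rank for binary symmetric tensors (Comas--Seiguer).'' Comas--Seiguer does not say this: it computes the Waring rank of binary forms, i.e.\ the rank with respect to the rational normal curve $C$, which only gives $r_C(p)=k-1$ and hence the \emph{upper} bound $r_X(p)\le r_C(p)$. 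The assertion that the rank with respect to the much larger variety $X\supset \nu(C)$ cannot drop is an instance of Comon's question (equality of rank and symmetric rank), which is not a theorem you can cite --- it is false in general and, for this family, is essentially equivalent to what you are trying to prove. Your fallback suggestions do not close the gap either: flattening/catalecticant bounds are lower bounds for the \emph{border} rank, which here equals $3$, so they can never certify a rank of $k-1$; and ``inductive contraction on $k$'' is the right idea but is stated without any argument.

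For comparison, the paper's proof of this lower bound (Proposition \ref{d1}(b)) is a genuine induction on the number of factors: assuming $r_X(p)\le k-2$, one picks a point $v=(v_1,\dots,v_k)$ in a set $S\in\Ss(p)$ evincing the rank with $v_k\ne o_k$ (possible by Autarky), projects linearly from $\langle\nu(\pi_k^{-1}(v_k))\rangle$ --- which realizes the projection $\tau_k$ onto the first $k-1$ factors --- and shows via a cohomological statement (Claim \ref{Claim1}, an $h^1$-vanishing for $\Ii_{F\cup\Gamma}(1,\dots,1)$) together with the inductive hypothesis and the known ranks on $\langle\nu'(\beta)\rangle$ from \cite{bb1} that the image point would be forced into $\langle\nu(\{o\}\cup D)\rangle$, contradicting Claim \ref{Claim1}. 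Some argument of this kind (or another honest proof that no $(k-2)$-point decomposition over the full Segre exists) is what your proposal is missing; there is also a smaller unproved assertion in your write-up, namely that $p\notin\sigma_2(X)$ (as opposed to $p\notin\sigma_2(\nu(C))$), which needs the classification of points of $\sigma_2$ rather than the ``rigidity'' remark you give.
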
}

\begin{remark} If $\alpha =k$, i.e. if $n_i \ge 2$ for all $i$'s, Theorems \ref{i1}  and \ref{dd1} give the ranks of the points of $\sigma _3(X)\setminus \sigma _2(X)$. 		
 In Remark \ref{uu6} we discuss the reason why we do not know the rank of  a specific $p\in \sigma _3(X)\setminus \sigma _2(X)$.

Moreover, in the case of Segre varieties where factors have dimension $n_i \geq 2$, Theorem \ref{dd1} says that all ranks from $3$ to $2k- 1$ can be attained. Therefore the above result is  sharp.
\end{remark}

As remarked in the Introduction, we can extend some of the ideas of \cite{bb4} on the notion of curvilinear rank to some tools  used in our proof of Theorem \ref{i1} to the case of partially symmetric tensors.

\begin{definition} A scheme $Z\subset {\PP^{n_1}\times \cdots \times \PP ^{n_k}}$ is \emph{curvilinear} if it is a finite union of disjoint schemes of the form $\mathcal{O}_{C_i,P_{i}}/{{m}}_{p_i}^{e_i}$ for smooth points $p_i\in \PP^{n_1}\times \cdots \times \PP ^{n_k}$ on reduced curves $C_i\subset\PP^{n_1}\times \cdots \times \PP ^{n_k}$. Equivalently $Z$ is curvilinear if the tangent space at each of its connected component  supported at the $p_i$'s has Zariski dimension $\leq 1$. We define the \emph{curvilinear rank} $\mathrm{Cr}(p)$  of a point $p\in \mathbb{P}^N$ as:
$$\mathrm{Cr}(p):=\min\left\{\deg(Z)\; | \; \nu _{n_1^{d_1},\dots ,n_k^{d_k}}(Z)\subset X, \; Z \hbox{ curvilinear, } p\in \langle \nu _{n_1^{d_1},\dots ,n_k^{d_k}}(Z) \rangle\right\}.$$
\end{definition}
 In Section \ref{SectionCurvilinear} we prove the following result.

 \begin{theorem}\label{i2}
 If there exists a  degree $c$ curvilinear scheme $Z\subset {\PP^{n_1}\times \cdots \times \PP ^{n_k}}$ such that $p\in \langle \nu _{n_1^{d_1},\dots ,n_k^{d_k}}(Z)\rangle$ and $Z$ has $\alpha$ connected components,
 each of them mapped by $\nu _{n_1^{d_1},\dots ,n_k^{d_k}}$ into a linearly independent zero-dimensional sub-scheme of $\PP^N$, then $r_X({p}) \le 2\alpha +c\left(-1+\sum _{i=1}^{k} d_i\right)$.
 \end{theorem}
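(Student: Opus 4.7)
My plan proceeds in three stages: reduce to a single connected component using the linear-independence hypothesis, model each component by a rational curve in $\PP^{n_1}\times\cdots\times\PP^{n_k}$, and apply Sylvester's theorem on the resulting rational normal curve in $\PP^N$. For the first stage, write $Z = Z_1 \sqcup \cdots \sqcup Z_{\alpha}$ as its decomposition into connected components with $c_j := \deg Z_j$, so $\sum_j c_j = c$; set $D := \sum_{i=1}^k d_i$. The linear-independence assumption on the images $\nu_{d_1,\ldots,d_k}(Z_j)$ places the spans $\langle \nu_{d_1,\ldots,d_k}(Z_j) \rangle$ in direct position, so $p$ decomposes as $p = p_1 + \cdots + p_{\alpha}$ with $p_j \in \langle \nu_{d_1,\ldots,d_k}(Z_j) \rangle$; by subadditivity of $X$-rank, $r_X(p) \le \sum_j r_X(p_j)$, reducing the theorem to showing $r_X(p_j) \le 2 + c_j(D-1)$ for each $j$.

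For the second stage, fix a connected component $Z_j$ supported at $P_j$. Since $Z_j$ is curvilinear, I would lift its formal parametrization at $P_j$ to an algebraic morphism $h_j\colon \PP^1 \to \PP^{n_1}\times\cdots\times\PP^{n_k}$ of multi-degree $(c_j-1,\ldots,c_j-1)$ whose image $\gamma_j$ contains $Z_j$ scheme-theoretically; such a lift exists because degree-$(c_j-1)$ polynomial maps in each factor carry enough coefficients to match the jet of $Z_j$ to order $c_j$. Composing with $\nu_{d_1,\ldots,d_k}$, the image $R_j := \nu_{d_1,\ldots,d_k}(\gamma_j) \subset \PP^N$ is a rational curve of degree $e_j \le (c_j-1)D$ in its span; for a generic choice of $h_j$, a Bertini-type argument on the restriction map $H^0(\PP^N,\Oo(1)) \to H^0(\PP^1,\Oo(e_j))$ shows that $R_j$ is a non-degenerate rational normal curve in $\PP^{e_j}$.

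For the third stage, note that $\nu_{d_1,\ldots,d_k}(Z_j) \subset R_j$ is a length-$c_j$ curvilinear subscheme and $p_j$ lies in its span, so the $R_j$-border rank of $p_j$ is at most $c_j$. Applying Sylvester's rank-vs-border-rank dichotomy for rational normal curves, one obtains $r_{R_j}(p_j) \le e_j - c_j + 2 \le (c_j-1)D - c_j + 2 \le 2 + c_j(D-1)$. Since $R_j \subset X$, it follows that $r_X(p_j) \le 2 + c_j(D-1)$, and summing over $j$ yields $r_X(p) \le 2\alpha + c(D-1)$, as required.

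The main obstacle is ensuring the lift $h_j$ really produces a non-degenerate rational normal curve $R_j$, so that Sylvester's theorem applies; this requires a careful Bertini / genericity argument. A secondary subtlety arises when $p_j$ has $R_j$-border rank strictly less than $c_j$: in that regime, either the ``rank equals border rank'' branch of Sylvester's dichotomy yields a bound already below the target, or one passes to a smaller curvilinear sub-scheme of $R_j$ whose span contains $p_j$ and re-runs the estimate with a smaller value of $c$. The overall strategy parallels and extends the symmetric-tensor curvilinear-rank techniques of \cite{bb4} to the partially symmetric setting.
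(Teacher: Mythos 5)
Your overall strategy --- split $Z$ into its connected components, extend each component to a rational curve through it, and invoke Sylvester on that curve --- is exactly the paper's, and your first stage (subadditivity over the components, reducing to the bound $2+c_j\left(-1+\sum_i d_i\right)$ for each connected piece) matches the paper's reduction. The jet-lifting in your second stage is also essentially the paper's Lemma on extending a morphism $E=cu\to\PP^n$ to all of $\PP^1$; in fact your degree bound $c_j-1$ per factor is attainable (the restriction $H^0(\Oo_{\PP^1}(c-1))\to H^0(\Oo_{E}(c-1))$ is an isomorphism), slightly sharpening the paper's bound $e\le c$.

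The genuine gap is in your third stage, and it is precisely the point you flag as ``the main obstacle'': you need $R_j=\nu_{d_1,\dots,d_k}(\gamma_j)$ to be a rational normal curve in its span, and the proposed Bertini/genericity argument is not available. The lift $h_j$ is essentially \emph{forced} by $Z_j$ --- in your degree-$(c_j-1)$ normalization the lift of each coordinate section is unique --- so there is no generic parameter to vary; and even allowing higher-degree lifts does not help, since the span of $\nu_{d_1,\dots,d_k}(h_j(\PP^1))$ is the image of $S^{d_1}V_1^\ast\otimes\cdots\otimes S^{d_k}V_k^\ast$ under restriction, which fails to be the complete linear system whenever some factor map $h_{j,i}$ is given by an incomplete linear system (e.g.\ when $\pi_i(Z_j)$ spans a small $\PP^{n_i}$). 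So $R_j$ can genuinely be a degenerate projection of the rational normal curve, and Sylvester does not apply to it directly. The paper closes this by treating $\tilde D$ explicitly as the linear projection of a rational normal curve $C\subset\PP^a$ from a center $W$ with $C\cap W=\emptyset$, using the hypothesis that $\nu_{d_1,\dots,d_k}(Z_j)$ is linearly independent to guarantee $W\cap\langle E'\rangle=\emptyset$, lifting $p_j$ to a point $q\in\langle E'\rangle$ and bounding $r_{\tilde D}(p_j)\le r_C(q)$; this is where the linear-independence hypothesis actually does its work, whereas in your write-up it is never used beyond the (unneeded) decomposition of $p$. A second, smaller omission: you never consider the case where $h_j$ is not birational onto its image (a $\kappa$-fold cover), which changes the degree bookkeeping; the paper handles it by factoring through the normalization. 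Your remark about border rank possibly dropping below $c_j$ is correctly resolved by passing to a minimal subscheme, as the paper does by assuming $p\notin\langle\nu_{d_1,\dots,d_k}(G)\rangle$ for all $G\subsetneq Z$ from the outset.
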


\section{Proof of Theorem \ref{i1}}\label{Sectioni1}

\begin{remark}\label{Remarka1}
Fix a degree $3$ connected curvilinear scheme $E\subset \PP^2$ not contained in a line and a point $u\in \PP^1$. The scheme $E$ is contained in a smooth conic. Hence there is
an embedding $f: \PP^1 \to \PP^2$ with $f(\PP^1) =C$ and $f(3u) =E$.
\end{remark}

\begin{remark}\label{Remarka2}
For any couple of points $u, o\in \PP^1$, there is an isomorphism $f: \PP^1\to \PP^1$ such that $f(u) =o$. For any such $f$ we have $f(3u) =3o$.
\end{remark}

\begin{remark}\label{Remarka3}
Fix two points $u, o\in \PP^1$. There is a morphism $f: \PP^1\to \PP^1$ such that $f(u) =o$, $f$ is ramified at $u$ and $\deg (f)=2$, i.e. $f^\ast (\Oo _{\PP^1}(1)) \cong \Oo _{\PP^1}(2)$.
Since $\deg (f) =2$, $f$ has only order $1$ ramification at $u$. Thus $f(3u) = 2o$ (as schemes).
\end{remark}

We recall the following lemma proved in \cite[Lemma 3.3]{bb5}.

\begin{lemma}[Autarky]\label{autarky} Let $p\in\langle X \rangle$ with $X$ being the Segre-Veronese variety of $\PP^{n_1}\times \cdots \times \PP^{n_k}$ embedded in multidegree $(d_1, \ldots , d_k)$. If there exist $\mathbb{P}^{m_i}$, $i=1, \ldots , k$, with $m_i\leq n_i$, such that $p\in \langle \nu_{m_1^{d_1}, \ldots , m_k^{d_k}}(\PP^{m_1}\times \cdots \times \PP^{m_k})\rangle $, then the $X$-rank of $p$  is the same as the $Y$-rank of $p$ where $Y$ is the Segre-Veronese $\nu_{m_1^{d_1}, \ldots ,m_k^{d_k}}(\PP^{m_1}\times \cdots \times \PP^{m_k})$.
\end{lemma}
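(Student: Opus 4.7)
The statement splits into two inequalities. The direction $r_X(p) \leq r_Y(p)$ is immediate: the linear inclusions $\PP^{m_i} \hookrightarrow \PP^{n_i}$ induce a closed embedding $Y \hookrightarrow X$, so any decomposition of $p$ as a combination of points of $Y$ is \emph{a fortiori} a decomposition by points of $X$.

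For the nontrivial inequality $r_Y(p) \leq r_X(p)$, I would construct a linear projection of tensor spaces that collapses $X$ onto $Y$ while fixing every vector in $\langle Y\rangle$. Write each $V_i = W_i \oplus U_i$, where $\PP(W_i) = \PP^{m_i}$ is the linear subspace cutting out $Y$ inside $X$, and let $\pi_i : V_i \to W_i$ be the projection along $U_i$. Taking induced symmetric-power maps and tensoring yields a surjective linear map
$$\Pi : S^{d_1}V_1 \otimes \cdots \otimes S^{d_k}V_k \longrightarrow S^{d_1}W_1 \otimes \cdots \otimes S^{d_k}W_k.$$
Two properties are then verified directly: (i) $\Pi$ sends a decomposable tensor $v_1^{\otimes d_1}\otimes\cdots\otimes v_k^{\otimes d_k}$ to $\pi_1(v_1)^{\otimes d_1}\otimes\cdots\otimes \pi_k(v_k)^{\otimes d_k}$, which is either zero or again on the affine cone over $Y$; and (ii) $\Pi$ restricts to the identity on $S^{d_1}W_1\otimes\cdots\otimes S^{d_k}W_k$, viewed as a subspace of the source through the inclusions $W_i\hookrightarrow V_i$.

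Now fix a representative $\hat p$ of $p$ and a minimal $X$-rank expression $\hat p = \sum_{i=1}^{r_X(p)} \lambda_i v_{1,i}^{\otimes d_1}\otimes\cdots\otimes v_{k,i}^{\otimes d_k}$. The hypothesis $p \in \langle Y\rangle$ translates into $\hat p \in S^{d_1}W_1\otimes\cdots\otimes S^{d_k}W_k$, so by (ii) we have $\Pi(\hat p) = \hat p$. Applying $\Pi$ to the above expression and invoking (i), each summand is sent either to zero or to a decomposable tensor representing a point of $Y$; discarding the zero summands gives a decomposition of $p$ by at most $r_X(p)$ points of $Y$, proving $r_Y(p) \leq r_X(p)$. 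The only genuine bookkeeping is ensuring that the ambient identification is compatible with the inclusion $\langle Y\rangle \subset \langle X\rangle = \PP^N$ induced by the Segre-Veronese functor, but this is automatic from the choice of splittings. No substantive obstacle remains: a linear projection that fixes $p$ cannot increase its rank, and that is the entire content of the argument.
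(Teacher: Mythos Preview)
The paper does not prove this lemma here; it is simply recalled from \cite[Lemma~3.3]{bb5}. Your argument is correct and is the standard projection proof of autarky: a choice of linear complement $U_i$ in each factor yields a linear retraction $\Pi$ of the ambient tensor space onto $S^{d_1}W_1\otimes\cdots\otimes S^{d_k}W_k$ that carries decomposable tensors to decomposable tensors (or zero) and fixes $\hat p$, whence $r_Y(p)\le r_X(p)$. This is essentially the approach of the cited reference (and of \cite{bl1} in the border-rank setting), so nothing further is needed.
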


\begin{corollary}\label{AutarkyCorollary} Let  $\Gamma\subset \PP^{n_1} \times \cdots \times \PP^{n_k}$ be a 0-dimensional scheme of minimal degree such that  $p\in \langle \nu_{n_1^{d_1},\dots ,n_k^{d_k}} (\Gamma)\rangle$, then the $X$-rank of $p$ is equal to its $Y$-rank where $Y$ is the Segre-Veronese embedding of $ \PP^{m_1} \times \cdots \times\PP^{m_k}$ where each $m_i=\dim \langle \pi_i(\Gamma) \rangle-1\leq\deg ( \pi_i(\Gamma))-1$ ($\pi_i$  as in Notation \ref{Setting}). If there exists an index $i$ such that $\deg ( \pi_i(\Gamma))=1$, then we can take $Y$ to be the Segre-Veronese embedding of  $\PP^{m_1} \times \cdots \times \hat\PP^{m_{i}}\times \cdots \times \PP^{m_k}$.
\end{corollary}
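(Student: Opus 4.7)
The plan is to derive this corollary as a direct consequence of the Autarky Lemma~\ref{autarky}. For each $i\in\{1,\dots,k\}$, I would set $L_i:=\langle \pi_i(\Gamma)\rangle\subset \PP^{n_i}$, a linear subspace of dimension $m_i$. Since by construction $\pi_i(\Gamma)\subset L_i$ for every $i$, the closed immersion $\Gamma\hookrightarrow \PP^{n_1}\times \cdots \times \PP^{n_k}$ factors through the closed subscheme $L_1\times\cdots\times L_k$, which, after choosing coordinates, is identified with $\PP^{m_1}\times\cdots\times\PP^{m_k}$. The restriction of $\nu_{d_1,\dots,d_k}$ to this sub-product is precisely the Segre-Veronese embedding $Y$ of $\PP^{m_1}\times\cdots\times\PP^{m_k}$, and its linear span in $\PP^N$ contains $\nu_{d_1,\dots,d_k}(\Gamma)$ and hence $p$. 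Lemma~\ref{autarky} then gives $r_X(p)=r_Y(p)$.

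The bound $m_i\le \deg(\pi_i(\Gamma))-1$ is the standard inequality for the linear span of a zero-dimensional subscheme: if $W\subset \PP^{n_i}$ has degree $d$, then the restriction map $H^0(\langle W\rangle,\Oo(1))\to H^0(W,\Oo_W(1))$ is injective (because $W$ spans $\langle W\rangle$), while the target has dimension $d$, so $\dim\langle W\rangle \le d-1$. Applying this with $W=\pi_i(\Gamma)$ yields the claim.

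For the moreover clause, assume $\deg(\pi_i(\Gamma))=1$, so that $\pi_i(\Gamma)$ is a reduced point $[v_i]\in \PP^{n_i}$ and $m_i=0$. The Segre-Veronese embedding of $\PP^{m_1}\times\cdots\times \PP^{0}\times\cdots\times \PP^{m_k}$ amounts to tensoring every decomposable element with the fixed vector $v_i^{\otimes d_i}$, namely the map
$$(x_1,\dots,\hat x_i,\dots,x_k)\longmapsto x_1\otimes\cdots\otimes v_i^{\otimes d_i}\otimes\cdots\otimes x_k ,$$
which is a linear isomorphism between the ambient spaces of the Segre-Veronese variety of $\PP^{m_1}\times \cdots \times \hat\PP^{m_i}\times \cdots \times \PP^{m_k}$ and of $Y$. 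A linear isomorphism preserves the secant stratification, so the $Y$-rank of $p$ is unchanged upon removing the $i$-th factor, giving the assertion.

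The only point that requires a bit of care is the scheme-theoretic factorization $\Gamma\hookrightarrow L_1\times\cdots\times L_k$: one must check that each composition $\Gamma\xrightarrow{\pi_i}\PP^{n_i}$ factors through the closed subscheme $L_i$, after which the universal property of the product delivers the desired morphism. Beyond this minor subtlety, the whole argument is essentially a bookkeeping exercise combining Lemma~\ref{autarky} with the classical span bound for zero-dimensional schemes.
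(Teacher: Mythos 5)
Your argument is correct and follows essentially the same route as the paper: both proofs simply apply the Autarky Lemma with $\PP^{m_i}=\langle\pi_i(\Gamma)\rangle$ and then drop any factor where $\pi_i(\Gamma)$ is a single point. Your write-up is somewhat more careful than the paper's (which only records the weaker bound $m_i\le\deg(\pi_i(\Gamma))$), spelling out the injectivity argument for the span bound and the linear identification that removes a $\PP^0$ factor, but the underlying idea is identical.
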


\begin{proof} Consider the projections $\pi _i: {\PP^{n_1}\times \cdots \times \PP ^{n_k}}$ onto the $i$-th factor $\PP^{n_i}$ as in Notation \ref{Setting}. It may happen that $\deg(\pi_i(\Gamma))$ can be any value from $1$ to  $\deg(\Gamma)$.

By the just recalled Autarky Lemma (cf. Lemma \ref{autarky}), we may assume that each $\pi _i(\Gamma)$ spans the whole $\PP^{n_i}$. Therefore if there is an index $i\in \{1, \ldots , k\}$ such that $\deg(\pi_i(\Gamma))=1$ we can take  $p\in \PP^{n_1} \times \cdots \times \hat\PP^{n_{i}}\times \cdots \times \PP^{n_k}$. 
Moreover the autarkic fact that we can assume  $\PP^{n_i}$ to be $\langle\pi_i(\Gamma)\rangle$ implies that we can replace each $\PP^{n_i}$ with $\PP^{\dim\langle\pi_i(\Gamma)\rangle-1}$ and clearly $\dim\langle\pi_i(\Gamma)\rangle\leq \deg (\pi_i(\Gamma))$.
\end{proof}

\begin{proof}[Proof of Theorem \ref{i1}:] Because of the filtration of secants varieties \eqref{filtrationSec}, for a given element $p\in \sigma_3(X)$, it may happen that either $p\in X$, or $p\in \sigma_2(X)\setminus X$ or $p\in \sigma_3(X)\setminus \sigma_2(X)$. We distinguish among these cases.

\begin{enumerate}

\item If $p\in X$, then $r_X({p})=1$. 

\item If $p\in \sigma _2(X)\setminus X$ then either $p$ lies on a honest bisecant line to $X$ (and in this case obviously $r_X({p}) =2$) or $p$ belongs to certain tangent line to $X$.
In this latter case,   the minimum number $h\leq k$ of factors containing such a tangent line is the minimum integer such that $p\in \langle \nu_{n_1^{d_1},\dots ,n_h^{d_h}}(\PP^{n_1}\times \cdots \times \PP^{n_h})\rangle$ (maybe reordering factors). In \cite[Theorem 3.1]{bb5} we proved that, if this is the case, then  $r_X(p)=\sum_{i=1}^h d_i$.

\item From now on we assume that $p\in \sigma _3(X)\setminus \sigma _2(X)$. By \cite[Theorem 1.2]{bl2} there is short list of zero-dimensional schemes $\Gamma\subset {\PP^{n_1}\times \cdots \times \PP ^{n_k}}$
such that $p\in \langle \nu _{n_1^{d_1},\dots ,n_k^{d_k}}(\Gamma)\rangle$, therefore, in order to prove   Theorem \ref{i1},  it is sufficient to bound the rank of the points in $\langle \nu _{n_1^{d_1},\dots ,n_k^{d_k}}(\Gamma)\rangle$ for each 
$\Gamma$ in their list. 

Since $p\in \sigma_3(X)\setminus \sigma _2(X)$, The possibilities for $\Gamma$ are only the following: either
$\Gamma$ is a smooth degree 3 zero-dimensional scheme (case \eqref{(a)} below), or it is the union of a degree 2 scheme supported at one point and a simple point (case \eqref{(b)}), or it is
a curvilinear degree $3$ scheme (case \eqref{(c)}) or, finally, a very particular degree $4$ scheme with $2$ connected components of degree $2$ (case \eqref{(d)}). 

\begin{enumerate}[(3a)]
\item\label{(a)}
If $\Gamma$ is a set of $3$ distinct points, then obviously $r_X({p}) =3$ (\cite[Case (i), Theorem 1.2]{bl2}). 

\item\label{(b)} If $\Gamma$ is a disjoint union of a  simple point $a$ and a degree $2$ connected scheme  
(\cite[Case (ii), Theorem 1.2]{bl2}), then there is a point  $q$ on a tangent line to $X$ such that $p\in \langle \{\nu _{n_1^{d_1},\dots ,n_k^{d_k}}(a),q\}\rangle$. Hence
$r_X({p})\le 1+r_X(q) \le 1+\sum _{i=1}^{k} d_i$ (for the rank on the tangential variety of $X$ see \cite{bb1}). Since $d_i>0$ for all $i$'s and $k\ge 2$, then $1+\sum _{i=1}^{k} d_i \le -1 +\sum _{i=1}^{k} 2d_i$. 

\item\label{(c)} Assume $\deg (\Gamma) =3$ and that $\Gamma$ is connected (\cite[Case (iii), Theorem 1.2]{bl2}) supported at a point $\{o\}:= \Gamma_{\mathrm{red}}$. 
Since the case $k=1$ is true by \cite[Theorem 37]{bgi}, we can prove the theorem by using induction on $k$, with the case $k=1$ as the starting case.

Since $\deg(\Gamma)=3$, by Corollary \ref{AutarkyCorollary}, we can assume that $p$ belongs to   a Segre-Veronese variety of $k$ factors all of them being either $\PP^1$'s or $\PP^2$'s, i.e., after having reordered the factors, 
$$\label{P1P2}
p\in\nu_{1^{d_1}, \ldots , 1^{d_a}, 2^{d_{a+1}}, \ldots 2^{d_k}}(\PP^1)^a \times ( \PP^2)^b.
$$
The $\PP^1$'s correspond to the cases in which 
either 
$\deg (\pi _i(\Gamma)) =3$ and $\dim \langle \pi _i(\Gamma)\rangle =1$ (i.e. $\pi _i(\Gamma)$ is contained in a line of the original $\PP^{n_i}$), 
or $\deg(\pi_i(\Gamma))=2$ (notice that in this case  $\pi _{i \mid \Gamma}$ is not an embedding). 
The $\PP^2$'s correspond to the cases in which $\dim \langle \pi_i(\Gamma)\rangle=2$, $\deg(\pi_i(\Gamma))=3$. Finally we can exclude all the cases in which $\deg(\pi_i(\Gamma))=1$ because, again by Corollary \ref{AutarkyCorollary}, we would have that $p$ belongs to a Segre-Veronse variety of less factors and then this won't give the highest bound for the rank of $p$.

Now fix a point $u\in \PP^1$. By Remarks \ref{Remarka1}, \ref{Remarka2} and \ref{Remarka3} there is 
\begin{equation}\label{fi}f_i: \PP^1 \to \PP^{n_i} \hbox{ with }f_i(3u) =\pi _i(\Gamma).
\end{equation} 
Consider the map 
$$\label{f1...fk}
f = (f_1,\dots ,f_k): \PP^1\to {\PP^{n_1}\times \cdots \times \PP ^{n_k}}.
$$
We have $f(u) = \{o\}$ and $\pi _i(f(3u)) =f_i(3u) =\pi _i(\Gamma)$. Since $\pi _i(f(3u)) =\pi _i(\Gamma)$ for all $i$'s, the universal property of products gives $f(3u) =\Gamma$. The map $f$ has multi-degree $(a_1,\dots ,a_k)$ where $a_i = 1$ if $n_i=1$ and $\deg (\pi _i(\Gamma)) =3$, and $a_i=2$ in all other cases.
Notice that $f_i$ is an embedding if $\deg (\pi _i(\Gamma)) \ne 2$. Since $\deg (\pi _i(\Gamma))=2$ if and only if $\pi _i^{-1}(o_i)$ contains the line spanned by the degree $2$ sub-scheme 
of $\Gamma$, we have $\deg (\pi _i(\Gamma))=2$ for at most one index $i$. Since $k\ge 2$, $f$ is an embedding. Set 
$$C:= \nu _{n_1^{d_1},\dots ,n_k^{d_k}}\left(f\left(\PP^1\right)\right) \hbox{ and } Z:= \nu _{n_1^{d_1},\dots ,n_k^{d_k}}(\Gamma).$$
The curve $C$ is smooth and rational of degree $\delta := \sum _{i=1}^{k} a_id_i$. Note that $\delta \le \sum _{i=1}^{k} 2d_i$. Hence to
prove Theorem \ref{i1} in this case it is sufficient to show that $r_C({p})\le \delta -1$ because clearly $r_C(p)\geq r_X(p)$ since $C\subset X$. 

By assumption $p\in \langle Z\rangle$. Since $p\notin \sigma _2(X)$, $\langle Z\rangle$ is not a line of $\PP^N$. Hence $\langle Z\rangle$ is a plane because $\deg(Z)=\deg(\Gamma)=3$. 
Since $C$ is a degree $\delta$ smooth rational curve, we have $ \dim \langle C\rangle \le \delta$. By \cite[Proposition 5.1]{lt} we have $r_C({p})\le  \dim \langle C\rangle$.
Hence it is sufficient to prove the case $\delta = \dim \langle C\rangle$, i.e. we may assume that $C$ is a rational normal curve in its linear span. 

\quad If $\delta \ge 4$, since $Z$ is connected and of degree $3$, by Sylvester's theorem (cf. \cite{cs}) we have $p$ has $C$-border rank $3$ and $r_C({p}) = \delta -1$, concluding the proof in this case. 

\quad If $\delta \le 3$,  we have $\sigma _2({C})=\langle C\rangle$ and hence $p\in \sigma _2(X)$, contradicting $p\in \sigma_3(X)\setminus \sigma_2(X)$.

\item\label{(d)} Assume that $\Gamma$ has degree $4$ (\cite[Case (iv), Theorem 1.2]{bl2}). J. Buczy\'nski and J.M. Landsberg show that $p$ belongs to the span of two tangent lines to $X$ whose set theoretic intersections with $X$ span a line which is contained in $X$. This means that $\Gamma = v\sqcup w$ with $v,w$ being two degree 2 reduced zero-dimensional schemes
 with support  contained in a line $L\subset {\PP^{n_1}\times \cdots \times \PP ^{n_k}}$ and moreover that the multi-degree of $L$ is  $\epsilon _i$ for some $i=1, \ldots , k$ (cfr. Notation \ref{Setting}). This case occurs only when $d_i=1$, i.e. when $\nu _{n_1^{d_1},\dots ,n_k^{d_k}}(L)=\nu _{n_1^{1},\dots ,n_k^{1}}(L)=\tilde L$
is a line.

Observe that $\tilde v:=\nu_{n_1^{d_1},\dots ,n_k^{d_k}}(v)$ and $\tilde w:=\nu_{n_1^{d_1},\dots ,n_k^{d_k}}(w)$ are two tangent vectors to $X$. In \cite[Theorem 1]{bb1} we prove that the $X$-rank of a point $p\in T_o(X) $  for  a certain point $o=(o_1, \ldots , o_k)\in X$, is the minimum number $\eta_{X}(p)$ for which  there exist  $E\subseteq \{1,\dots ,k\}$ such that $\sharp (E)=\eta_{X}(p)$ and $T_o(X) \subseteq \langle \cup _{i\in E} Y_{o,i}\rangle$ where $Y_{o,i}$ is the $n_i$-dimensional linear subspace obtained by fixing all coordinates $j\in \{1,\dots ,k\}\setminus \{i\}$ equal to $o_j\in \mathbb{P}^n_i$.
Let $I$ and $J$ be the sets playing the role of $E$ for $\langle\tilde v\rangle$ and $\langle\tilde w\rangle$ respectively
 and set  $ I'=I\setminus \{i\}$ (meaning that $I'=I$ if $i\notin I$ and $I'=I\setminus \{i\} $ otherwise) and $ J'=J\setminus\{i\}$ .
 Now take $$\alpha := \sum _{j\in I'} d_j + \sum _{j\in J'} d_j +d_i$$ and note that $\alpha \le -1 + \sum _{h=1}^{k} 2d_h$, therefore if we prove that $r_X(p)\leq \alpha$ we are done.
 Let $D_j\subset \PP^{n_1}\times \cdots \times \PP^{n_k}$, $j\in I'$, be the line of multi-degree $\epsilon _J$ containing $\pi _j( v)$, and let $T_j$, $j\in J'$, be the line of $X$ of multi-degree $\epsilon _j$ containing $\pi _j(w)$. The curve $ L\cup \left( \bigcup _{j\in I'} D_j\right)$ contains $v$ and the curve $ L\cup \left( \bigcup _{j\in J'} T_j\right)$ contains $ w$. Hence the curve 
 $$T:=  L\cup \left(\bigcup _{j\in I'} D_j\right)\cup \left(\bigcup _{j\in J'} T_j\right)$$
  is a reduced and connected curve containing $\Gamma$. Since $p\in \langle \nu _{n_1^{d_1},\dots ,n_k^{d_k}}(\Gamma)\rangle$, we have that if we call  $\tilde T:=\nu_{n_1^{d_1},\dots ,n_k^{d_k}}( T)$ then $p\in \langle\tilde T\rangle$ and $r_{X}({p}) \le r_{\tilde T}({p})$. The curve $\tilde T$
 is a  connected curve whose irreducible components are smooth rational curves and with $\deg (\tilde T)=\alpha$. Hence $\dim \langle \tilde T\rangle \le \alpha$. Since $\tilde T$ is reduced and connected, as in \cite[Proposition 4.1]{lt} and in \cite{cs}, we get $r_{\tilde T}({p})\le \alpha$. Summing up $r_X(p)\leq r_{\tilde T}(p)\leq \alpha\leq  -1 + \sum _{h=1}^{k} 2d_h$.
\end{enumerate}
\end{enumerate}
\end{proof}

\section{Proof  of Theorem \ref{dd1}}\label{SectionSegre}
{Autarky Lemma (proved in  \cite[Lemma 3.3]{bb5} and recalled here in Lemma \ref{autarky}) is true also for the border rank (\cite[Proposition 2.1]{bl1}). This allows to formulate the analog of Corollary \ref{AutarkyCorollary} for border rank. Therefore, in order to prove Theorem \ref{dd1} and $x\le k-1$, we can limit ourselves to the study of the case $n_i=1$ for all $i$'s. This is the reason why in the first part of this section we will always work with the Segre variety of  $\PP^1$'s. }
Let 
\begin{equation}\label{1k}\nu_{1^{(k)}} :(\PP^1 )^{k}\to \PP^r, \; r=2^k-1
\end{equation} be the Segre embedding of $k$ copies of $\PP^1$'s and $X:=\nu_{1^{(k)}}((\PP^1)^k)$; and let  
\begin{equation}\label{1k-1}\nu_{1^{(k-1)}}:(\PP^1)^{k-1}\to \PP^{r'}, \; r'=2^{k-1}-1\end{equation}
be the the Segre embedding of $k-1$ copies of $\PP^{1}$'s and $X':=\nu_{1^{(k-1)}}((\PP^1)^{k-1}).$

\begin{proposition}\label{d1}
Assume $k\ge 3$. 
Let $\Gamma\subset (\PP^1)^k$ be a degree $3$ connected curvilinear
scheme 
such that $\deg (\pi _i(\Gamma)) =3$ for all $i$'s, and let $\beta$ be the only degree $2$ sub-scheme of $\Gamma$. 
For all $p\in
\langle \nu_{1^{(k)}} (\Gamma)\rangle \setminus \langle \nu_{1^{(k)}} (\beta )\rangle$ we have that  
\begin{enumerate}[(a)]
\item if $k=3$, then  $2\le r_X({p})\le 3$ and $r_X({p})=2$ if $p$ is general in $\langle \nu_{1^{(k)}} (\Gamma)\rangle$; 
\item if $k\ge 4$, then $r_X({p}) =k-1$.
\end{enumerate}
\end{proposition}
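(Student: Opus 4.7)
The plan is to reduce to a rank computation on a rational normal curve through $\Gamma$, apply the Sylvester--Comas--Seiguer theorem for the upper bounds, and use a multi-projective residual argument for the lower bound in (b).

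\emph{Setup.} Since $\deg\pi_i(\Gamma)=3$ for every $i$, the construction of Remarks~\ref{Remarka1}--\ref{Remarka3} (specialized to the equidegree situation where every $a_i$ appearing in the proof of Theorem~\ref{i1} equals $1$) gives an embedding $f=(f_1,\ldots,f_k)\colon\PP^1\to(\PP^1)^k$ of multi-degree $(1,\ldots,1)$, with each $f_i$ an isomorphism, satisfying $f(3u)=\Gamma$. Then $Y:=f(\PP^1)$ is a smooth rational curve through $\Gamma$, and $C:=\nu(Y)\subset X$ is a rational normal curve of degree $k$ spanning $\langle C\rangle\cong\PP^k$. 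The image $Z:=\nu(\Gamma)\subset C$ is a connected curvilinear length-$3$ scheme which is not collinear (otherwise some $\pi_i(\Gamma)$ would have degree $\le 2$), so $\Pi:=\langle Z\rangle$ is the osculating $2$-plane of $C$ at $\nu(o)$, and $\langle\nu(\beta)\rangle$ is the tangent line to $C$ there.

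\emph{Upper bounds.} For $k=3$, $C$ is a twisted cubic with $\sigma_2(C)=\langle C\rangle=\PP^3$, so $r_C(q)\le 3$ for every $q\in\Pi$, and hence $r_X(p)\le 3$. For $k\ge 4$, a direct coordinate check on the parameterization $t\mapsto[1:t:\cdots:t^k]$ of $C$ shows $\Pi\cap\sigma_2(C)=\langle\nu(\beta)\rangle$; therefore every $p\in\Pi\setminus\langle\nu(\beta)\rangle$ has $C$-border rank exactly $3$, with minimal apolar generator the cube of a linear form (because $Z$ is a triple point on $C$). The Sylvester--Comas--Seiguer theorem~\cite{cs} then gives $r_C(p)=k+2-3=k-1$, so $r_X(p)\le k-1$.

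\emph{Lower bound in (a).} A generic point of $\Pi$ lies on a proper chord of $C$ (the tangent developable of the twisted cubic meets $\Pi$ in a proper quartic curve), hence $r_C(p)=2$ and $r_X(p)\le 2$ generically. The inequality $r_X(p)\ge 2$ for every $p\in\Pi\setminus\langle\nu(\beta)\rangle$ reduces to $p\notin X$, which follows from the fact that $X\cap\Pi\subseteq\{\nu(o)\}\subseteq\langle\nu(\beta)\rangle$: no point of $X\setminus C$ can lie on the osculating plane of $C$ at $\nu(o)$ compatibly with the multi-degrees of curves in $X$ through $o$.

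\emph{Lower bound in (b), the main obstacle.} Suppose for contradiction $r:=r_X(p)\le k-2$, and let $S\subset(\PP^1)^k$ be a reduced set with $|S|=r$ and $p\in\langle\nu(S)\rangle$. The scheme $W:=S\sqcup\Gamma$ has $\deg W\le k+1$ and satisfies $h^1(\Ii_W(1,\ldots,1))>0$, because $\nu(W)$ is linearly dependent in $\PP^{2^k-1}$ (the point $p$ lies in both $\langle\nu(S)\rangle$ and $\langle\nu(\Gamma)\rangle$). To derive a contradiction, the plan is to exploit the curve $Y$: since $\Oo(1,\ldots,1)|_Y\cong\Oo_{\PP^1}(k)$ and $\Gamma\subset Y$ imposes three independent conditions on $|\Oo_Y(k)|$, one iteratively cuts $W$ with divisors in $|\Oo(\epsilon_i)|$ through appropriately chosen subsets of $S$ and residuates, in the spirit of the trace/residual techniques of~\cite{bb4,bl1,lt}, eventually forcing $|S|\ge k-1$. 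The principal technical difficulty, and the real heart of the proof, is controlling the combinatorics of how the reduced points of $S$ not lying on $Y$ distribute through these successive residuals while preserving the failure-of-independence condition $h^1>0$ at each stage.
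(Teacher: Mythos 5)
Your setup, the upper bounds via the rational normal curve $C=\nu(f(\PP^1))$ and Sylvester--Comas--Seiguer, and the generic rank-$2$ statement in (a) all match the paper. But the two lower bounds are where the content of the proposition lives, and neither is actually established in your write-up. In (a), the claim that $X\cap\langle\nu(\Gamma)\rangle\subseteq\{\nu(o)\}$ is asserted on the grounds that no point of $X\setminus C$ can lie on the osculating plane ``compatibly with the multi-degrees of curves in $X$ through $o$''; that is not an argument. The paper proves this by showing first that $\langle\nu(\Gamma)\rangle\cap\langle\nu(F)\rangle=\emptyset$ for any $F\in|\Oo_{(\PP^1)^3}(\epsilon_3)|$ with $o\notin F$ (a cohomology computation using that $f_1,f_2$ are isomorphisms), and then running the residual exact sequence of such an $F$ through a hypothetical $q\in X$ with $q\ne o$ to get $h^1(\Ii_{\Gamma\cup q}(1,1,1))=0$, contradicting the linear dependence forced by $p\in\langle\nu(q)\rangle\cap\langle\nu(\Gamma)\rangle$.

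The decisive gap is part (b). You state a plan --- iterated residuation of $W=S\sqcup\Gamma$ against divisors in $|\Oo(\epsilon_i)|$ --- and then explicitly concede that ``the real heart of the proof'' is an unresolved combinatorial difficulty. That heart is exactly what must be supplied, and the paper closes it with two ingredients absent from your sketch: induction on the number of factors $k$, and the classification of ranks on tangent lines of Segre varieties from \cite{bb1}. Concretely: assuming $\sharp(S)\le k-2$, Autarky guarantees a point $v\in S$ with $v_k\ne o_k$; one takes the single divisor $D=\pi_k^{-1}(v_k)$ and projects linearly from $\langle\nu(D)\rangle$, which realizes the Segre map $\nu'$ of $k-1$ factors and sends $p$ to $p'\in\langle\nu'(\Gamma')\rangle$ with an evincing set of cardinality $\le k-2$ missing at least the point over $v$. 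The minimal subscheme $\alpha\subseteq\Gamma'$ with $p'\in\langle\nu'(\alpha)\rangle$ cannot be $\Gamma'$ (inductive hypothesis would force rank $k-2$ on $k-1$ factors, i.e.\ $(k-1)-1$), cannot be the degree-$2$ subscheme $\beta$ (points of $\langle\nu'(\beta)\rangle\setminus\{\nu'(o')\}$ have $X'$-rank $k-1$ by \cite{bb1}), hence must be $\{o'\}$; but then $p\in\langle\nu(\{o\}\cup D)\rangle$, contradicting the same linear-independence claim used in (a). Without the induction and the tangential-rank input, your residuation plan has no mechanism to rule out the cases $\alpha=\beta$ and $\alpha=\{o'\}$, so the argument as proposed does not go through.
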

  
\begin{proof} Since $\Gamma\subset (\PP^1)^k$ is connected, it has support at only one point; all along this proof we  set
\begin{equation}\label{o}o:=\mathrm{Supp}(\Gamma)\in (\PP^1)^k.\end{equation}

First of all recall that in step \eqref{(a)} of the proof of Theorem \ref{i1} we obtained an embedding $f = (f_1,\dots ,f_k)$ with $f_i: \PP^1\to \PP^1$ an isomorphism (see \eqref{fi}); moreover we can fix a point $u\in \PP^1$ such that $f(u) =o$ and $\Gamma = f(3u)$.
We proved that 
$$C:= \nu_{1^{(k)}} (f(\PP^1))$$
is a degree $k$ rational normal curve in its linear span. Obviously 
$$r_X({p}) \le r_C({p}).$$ 

If $k\ge 4$ Sylvester's theorem implies $r_C({p}) =k-1$. 

Now assume
$k=3$.  Since a degree
$3$ rational plane curve has a unique singular point, for any $q\in \langle C\rangle $ there is a unique line $L\subset \langle C\rangle =\PP^3 $
with $\deg (L\cap C)= 2$. Thus $r_C({p}) =2$ (resp. $r_C({p}) =3$) if and only if $p\notin \tau ({C})$ (resp. $p\in \tau ({C})$, cfr. Definition \ref{SecantTangent}). Since $\tau ({C})$ is a degree $4$ surface, by Riemann-Hurwitz,
we see that both cases occur and that $r_C({p})=2$ (and hence $r_X({p}) =2$ if $p$ is general in $\langle \nu_{1^{(k)}} (\Gamma)\rangle$). 

\begin{claim}\label{Claim1} Let the point $o\in (\mathbb{P}^1)^k$ be, as in \eqref{o}, the support of $\Gamma$.
Fix any $F\in |\Oo _{(\PP^1)^k}(\epsilon _k)|$ such that $o\notin F$. Then $\langle \nu_{1^{(k)}} (\Gamma)\rangle \cap \langle \nu_{1^{(k)}} (F)\rangle =\emptyset$.
\end{claim}

\begin{proof}[Proof of Claim \ref{Claim1}]
It is sufficient to show that $h^0(\Ii _{F\cup \Gamma}(1,\dots ,1)) = h^0(\Ii _F(1,\dots ,1)) -3$, i.e. $h^0(\Ii _\Gamma(1,\dots ,1,0)) = h^0(\Oo _{(\PP^1)^k}(1,\dots ,1,0))
-3$. This is true because $f_1, \ldots ,f_{k-1}$ (recalled at the beginning of the proof this Proposition \ref{d1} and introduced in \eqref{fi}) are isomorphisms.
\end{proof}

\begin{enumerate}[(a)]
\item\label{Prop(a)} Assume $k=3$. Since $r_X({p}) \le r_C({p}) \le 3$ and $r_C({p})=2$ for a general $p$ in $\langle \nu_{1^{(3)}} (\Gamma)\rangle$, we only need to prove that $r_X({p})>1$. 
The case  $r_X({p})=1$ corresponds to a completely decomposable tensor: $p=\nu_{1^{(3)}} (q)$ for some $q\in {(\PP^1)^3}$. Clearly $r_X(\nu_{1^{(3)}}(o))=1$ but $o\in \langle \beta \rangle$ then, since we took $p \in \langle \nu_{1^{(3)}} (\Gamma)\rangle \setminus \langle \nu_{1^{(3)}} (\beta )\rangle$, we have $p\neq \nu_{1^{(3)}}(o)$ and in particular $q\neq o$. 
In this case we can add $q$ to $\Gamma$ and get that  $h^1(\Ii _{q\cup \Gamma}(1,1,1)) >0$ by \cite[Lemma 1]{bb}. Since $\deg (f_i(\Gamma)) =3$, for all $i$'s, every point of $\langle \beta \rangle \setminus \{o\}$
has rank $2$.
Since $q:=(q_1,q_2,q_3)\ne o:=(o_1,o_2,o_3)$ we have $q_i\ne o_i$ for some $i$, say for $i=3$. Take $F\in |\Oo _{(\PP^1)^3}(\epsilon _3)|$
such that $q\in F$ and $o\notin F$. Hence $F\cap (\Gamma\cup \{q\}) = \{q\}$. We have $h^1(F,\Ii _{q,F}(1,1,1)) =0$, because $\Oo _{(\PP^1)^3}(1,1,1)$ is spanned. Claim \ref{Claim1} gives $h^1(\Ii _\Gamma(1,1,0)) =0$. The residual exact sequence of $F$ in ${(\PP^1)^3}$ gives $h^1(\mathcal{I}_{\Gamma\cup \{q\}}(1,1,1))=0$, a contradiction.

\item\label{Prop(b)}  From now on we assume $k\ge 4$ and that Proposition \ref{d1} is true for a smaller number of factors. Since $X\supset C$, we have $r_X({p}) \le k-1$ (in fact, as we already recalled above, $r_C({p}) =k-1$  by Sylvester's theorem). We need to prove that we actually have an equality, so we assume
$r_X({p})\le k-2$  and we will get a contradiction.

Take a set of points $S\in \Ss ({p})$  of $(\PP^1)^{k}$ evincing the $X$-rank of $p$ (see Definition \ref{DefRank}) and consider $v =(v_1,\dots ,v_k)\in S\subset (\PP^1)^{k}$ to be a point appearing in a decomposition of $p$. We can always assume that, if $o = (o_1,\dots ,o_k)$, then $v_k\ne o_k$: such a $v\in S\subset \mathcal{S}(p)$  exists because, by Autarky (here recalled in Lemma \ref{autarky}), no element of $\Ss ({p})$ is contained in $(\PP^1)^{k-1}\times \{o_k\}$.

Consider the pre-image  $$D:= \pi _k^{-1}(v_k).$$
Clearly by construction $o\notin D$ hence for any $q\in {(\PP^1)^k}\setminus D$
we have $h^1(\Ii _{q\cup D}(1,\dots ,1)) = h^1(\Ii _{q}(1,\dots ,1,0)) = 0$, because $\Oo _{(\PP^1)^k}(1,\dots ,1,0)$ is globally generated. 
This implies that  $\langle \nu_{1^{(k)}} (D)\rangle$ intersects $X$ only in $\nu_{1^{(k)}} (D)$.

Now consider $$\ell :\PP^{2^{k}-1}\setminus \langle \nu_{1^{(k)}} (D)\rangle \to \PP^{2^{k-1}-1}$$  the linear projection from $ \langle \nu_{1^{(k)}} (D)\rangle$. 
Since $p \notin \langle \nu_{1^{(k)}} (D)\rangle$ (Claim \ref{Claim1}), $\ell$ is defined at $p$. 
Moreover the map $\ell $ induces a rational map $\nu_{1^{(k)}} ({(\PP^1)^k}\setminus D)\to \nu_{1^{(k-1)}} ({(\PP^1)^{k-1}})$ which is induced by the projection $\tau _k: {(\PP^1)^k}\to {(\PP^1)^{k-1}}$ defined in Notation \ref{Setting}.  We have 
$$\ell \circ \nu_{1^{(k)}} = \nu_{1^{(k-1)}}\circ \tau_k.$$
Since $o\notin D$, we have $\ell (\langle \Gamma\rangle ) =\langle \nu_{1^{(k-1)}}(\Gamma')\rangle$, where $\Gamma' =\tau _k(\Gamma)$. Hence $p':= \ell ({p})\in \langle \nu_{1^{(k-1)}}(\Gamma')\rangle$. By \cite{bb1} every element of $\langle \nu_{1^{(k-1)}}(\beta )\rangle \setminus \nu_{1^{(k-1)}}(o')$, with  $o':= \tau _k(o)$, has $X'$-rank
$k-1$.
Since $\deg (\pi _i(\Gamma)) =3$ for all $i$'s, we have $\deg (\pi _i(\beta ))=2$ for $i=1,\dots ,k-1$. This implies that the minimal sub-scheme  $\alpha$  of    $\Gamma'$ such that $p'\in \langle \nu_{1^{(k-1)}}(\alpha)\rangle$  is such that  $\alpha \ne \beta$ where  $\beta$ is the degree $2$ sub-scheme of $\Gamma'$. Now let $S'\subset (\PP^{1})^{k-1}$ be the projection by $\tau_k$ of the
set of points of $S\subset \mathcal{S}(p)$ that are not in $D$, i.e. 
$S':= \tau _k(S\setminus S\cap D)$. 
Since $\sharp (S')\le k-2$ and $p'\in \langle \nu_{1^{(k-1)}}(\Gamma')\rangle$, the inductive assumption gives $\alpha \ne \Gamma'$ (it works even when $k=4$). Hence $\alpha =\{o'\}$. Thus
$p\in \langle \nu_{1^{(k)}} (\{o\}\cup D)\rangle$. Hence $\dim (\langle \nu_{1^{(k)}} (\Gamma \cup D)\rangle) \le \dim ( \langle \nu_{1^{(k)}} (D)\rangle) +2$, contradicting Claim \ref{Claim1}.
\end{enumerate}\end{proof}

We need the following lemma, which is the projective version of an obvious linear algebra exercise.

\begin{lemma}\label{dd1.101}
Fix two linear spaces $L_1\subsetneq L_2\subset \PP^m$ and a finite set $E\subset L_2$ spanning $L_2$. Let $\ell : \PP^m\setminus L_1\to \PP^z$, $z:=
m -1-\dim L_1$, be the linear projection from $L_1$. Then $\ell (L_2\setminus L_1)$ is a linear space spanned by the set $\ell (E\setminus E\cap L_1)$.
\end{lemma}

\begin{notation}
Fix $(a,b)\in \NN ^2\setminus \{(0,0)\}$. Let $\Delta _{a,b}$ be the set of all pairs $(f,o)$, where $o\in \PP^1$, $f: \PP^1\to (\PP^2)^a\times (\PP ^1)^b$, each $\pi _i\circ f$, $1\le i\le a$, is a degree
$2$ embedding and, for $a+1\le i\le b$, $\pi _i\circ f$ is an isomorphism.
\end{notation}

\begin{lemma}\label{uu1} Set $\tilde G=\mathrm{Aut}(\mathbb{P}^2)^a \times \mathrm{Aut}(\PP^1)^b$, $G:= \tilde G \times \mathrm{Aut}(\PP^1)$. Let $G$ acts on $\Delta_{a,b}$ via
$(g,h) (f,o)=(g \circ f \circ h^{-1}, h(o))$.
Then this action is transitive, i.e., for $(f,o), (f',o')$ we have $(g,h)\in G$ such that $h(o)=o'$ and $g\circ f \circ h^{-1} = f'$.
\end{lemma}

\begin{proof}
Fix any $h\in \mathrm{Aut}(\PP^1)$ such that $h(o) = o'$ and write $\tilde{f}:= f\circ h^{-1}$. \\Write $\tilde{f }=(\tilde{f_1},\ldots ,\tilde{f}_a , \tilde{f}_{a+1}, \ldots, \tilde{f}_{a+b})$ and $f' = (f'_1,\ldots ,f'_a, f'_{a+1}, \ldots ,f'_{a+b})$ with $\tilde{f_i}:=\pi_i\circ \tilde{f}$ and $f'_i:=\pi_i\circ f'$. We need to find $g = (g_1,\dots ,g_a,g_{a+1},\dots ,g_{a+b})\in \tilde G$
such that $g\circ \tilde{f} = f'$, i.e. by the universal property of maps to products, we need to find $g= (g_1,\dots ,g_a,g_{a+1},\dots ,g_{a+b})\in \tilde G$ such that
$g_i\circ \tilde{f}_i = f'_i$ for all $i$.

If $a+1\le i \leq a+b$ take $g_i:= f'_i\circ \tilde{f_i}^{-1}$.  

Now we fix $i$ such that  $1\leq i \leq a$. We have two degree $2$ embeddings $f'_i: \PP^1\to \PP^2$ and $\tilde{f_i} : \PP^1\to \PP^2$. Any two such
maps are equivalent, up to an automorphism of $\PP^2$, because these embeddings are induced by  the complete linear system of the anticanonical line bundle of $\PP^1$.
Thus there is $g_i\in \mathrm{Aut}(\PP^2)$ such that $g_i\circ \tilde f_i =f'_i$.
\end{proof}

\begin{notation}\label{star}
Take $Y = (\PP^2)^a\times (\PP^1)^b$ and let $\nu_{2^{(a)},1^{(b)}}: Y \to \mathbb{P}^N $, $N:=3^a2^b-1$, be the Segre embedding of $Y$. Let $\Gamma _{a,b}$ (resp. $\Gamma '_{a,b}$) be the set of all $p\in \PP^N$, such there is $(f,o) \in \Delta _{a,b}$ with $p\in \langle \nu_{2^{(a)},1^{(b)}} (f(3o))\rangle$ (resp. and $p\notin \langle \nu_{2^{(a)},1^{(b)}} (f(2o))\rangle$). 
\end{notation}

Since the image of an algebraic set by a morphism is constructible, $\Gamma _{a,b}$ and $\Gamma '_{a,b}$ of Notation \ref{star} are constructible sets. The closure of $\Gamma _{a,b}$ in $\PP^N$ is irreducible.
Therefore we are allowed to inquire about the rank of a general element of $\Gamma _{a,b}$. If either $a>0$ or $b\ge 2$, then
$\Gamma '_{a,b}\ne \emptyset$ and the closures in $\PP^N$ of $\Gamma _{a,b}$ and $\Gamma '_{a,b}$ are the same.

\begin{lemma}\label{uu2}
For all $k\ge 3$ we have $r_X({p}) =2k-1$ for a general $p\in \Gamma _{k,0}$ as in Notation \ref{star}.
\end{lemma}

\begin{proof}
 We use induction on $k$, the case $k=3$ being true by \cite[Theorem 1.8]{bl2}. 
\\Now assume $k\ge 4$. Call $\nu_{2^{(k)}} :
(\PP^2)^k\to \PP ^r$, $r:= 3^k-1$,  the Segre embedding. Fix $a\in \PP^1$. For each $1\le i \le k$ let $f_i:\PP^1\to \PP^2$ be a
degree $2$ embedding. Let $f =(f_1,\dots ,f_k): \PP ^1\to (\PP^2)^k$ be the embedding with $f_i=\pi _i\circ f$ for all $i$. As in step
(\ref{(c)}) of the proof of Theorem \ref{i1} we see that the curve $C:= \nu_{2^{(k)}} (f(\PP^1))$ is a rational normal curve of degree
$2k$ in its linear span. Fix $a\in \PP^1$ and set $o:= (o_1,\dots ,o_k):= f(a)$ and $A:= f(3a)$. The scheme $\nu_{2^{(k)}} (A)$ has degree $3$ and it is
curvilinear. Fix a general $p\in \langle \nu_{2^{(k)}} (A)\rangle \setminus \langle \nu_{2^{(k)}} (2o)\rangle$. Since $p$ has border rank
$3$ with respect to the rational normal curve $C$, Sylvester's theorem gives $r_C({p}) = 2k-1$. Hence $r_X({p})\le 2k-1$. To
prove the lemma for the integer $k$ it is sufficient to prove that $r_X({p})\ge 2k-1$.

Assume $r_X({p})\le 2k-2$ and
fix $B\in \Ss ({p})$.

\begin{enumerate}[(a)]
 \item\label{quad(a)} In this step we assume the existence of a line $L\subset \PP^{n_k}$ such that $o_k\notin L$ and $\sharp (Y'\cap B)\ge 2$, where $Y':= \PP^{n_1}\times \cdots \times \PP^{n_{k-1}}\times L$. We have $Y'\in |\Oo _Y(\epsilon _k)|$. Since $o_k\notin L$, we have $o\notin Y'$ and hence $A\cap Y' =\emptyset$. Set $B':= B\setminus B\cap Y'$. Set $A':= \tau_k(A)$ where $\tau_k$ is defined in Notation \ref{Setting}.  Since $k\ge 3$ and $(f_1,f_2): \PP^1\to \PP^{2}\times \PP^{2}$ is an embedding, we have $\deg (A')=3$.
Let $\nu_{2^{(k-1)}}: (\mathbb{P}^2)^{k-1}\to \PP^s$, $s=3^{k-1}-1$, be the Segre embedding of $(\mathbb{P}^2)^{k-1}$. Note that the linear projection from $L$ of $\PP^2\setminus L$
sends $\PP^2\setminus L$ onto a point. Set $E:= \langle \nu_{2^{(k)}} (Y')\rangle$. We have $\dim E = 2\cdot 3^{k-1}-1$. Let $\ell :
\PP^M\setminus E\to \PP^s$ denote the linear projection from $E$. Since $A\cap Y' =\emptyset$, $\ell (\nu_{2^{(k)}} (A))$ is a
well-defined zero-dimensional scheme.  Note that $\nu _{2^1,2^1}(f_1,f_2)(\PP^1)$ is not a line of the Segre embedding of
$\PP^{2}\times \PP^{2}$. Since $k\ge 3$, we get that $\nu_{2^{(k-1)}}(A')$ spans a plane. Hence $\ell (\nu_{2^{(k)}} (A)) =A'$ is linearly
independent, i.e. $\langle \nu_{2^{(k)}} (A)\rangle \cap E=\emptyset$. Hence $p':= \ell ({p})$ is well-defined and in particular it is
well-defined its rank with respect to the Segre variety $X':= \nu_{2^{(k-1)}}((\mathbb{P}^2)^{k-1})$. Since $\dim \langle \nu_{2^{(k)}} (A)\rangle = \dim \langle
\nu_{2^{(k)}} (A')\rangle$ and $p$ is general in $\langle \nu_{2^{(k)}} (A)\rangle$, $p'$ is general in $\langle \nu_{2^{(k-1)}}(A')\rangle$. By the
inductive assumption (case $k\ge 5$) or by \cite[Theorem 1.8]{bl2} (case $k=4$), we have $r_{X'}(p') =2k-3$. Since $p\in
\langle \nu_{2^{(k)}} (B)\rangle$, Lemma \ref{dd1.101} applied to $E:= \nu_{2^{(k)}} (B)$, $m=3^k-1$ and $L_1=E$, gives $p'\in \langle \nu_{2^{(k-1)}}(B')\rangle$. Since $\sharp (B')\le \sharp (B)-2 < 3k-3$, we get a contradiction.

\item \label{quad(b)} Assume the non-existence of a line $L\subset \PP^{n_k}$ such that $o_k\notin L$ and $\sharp (Y'\cap B)\ge 2$. By Autarky we have $B\nsubseteq (\mathbb{P}^2)^{k-1}\times \{o_k\}$.
Hence the assumption of this step is equivalent to assuming the existence of $b\in B$ such that $\pi _k(b) \ne o_k$, but $\pi _k(B)$ is contained in the line
$R\subset \PP^{n_2}$ spanned by $o_k$ and $\pi _k(b)$. Hence $B\subset (\mathbb{P}^2)^{k-1}\times R$, contradicting Autarky, because $n_k=2$ and $f_k(3a)$ spans $\PP^2$.
\end{enumerate}
\end{proof}

\begin{lemma}\label{uu4} Let $\nu_{2^{(1)},1^{(1)}} (Y)$ be the Segre embedding of $\PP^2\times \PP^1$.
We have $\Gamma_{1,1} \nsubseteq \nu_{2^{(1)},1^{(1)}} (Y)$.
\end{lemma}

\begin{proof}
We have $\tau (\nu_{2^{(1)},1^{(1)}} (Y)) \supsetneq \nu_{2^{(1)},1^{(1)}} (Y)$. Since a general tangent vector of $Y$ is of the form $f(2o)$
with $(f,o)\in \Delta _{1,1}$, we get $\Gamma _{1,1}\nsubseteq \nu_{2^{(1)},1^{(1)}} (Y)$.
\end{proof}

\begin{definition} Let $X\subset \mathbb{P}^N$ be any variety,  $Z$ a zero-dimensional scheme and $H$  an effective Cartier divisor. We define the scheme $Res _H(Z)\subset \mathbb{P}^N$ to be  the \emph{residue scheme of $Z$ with respect to $H$}, namely the subscheme of $\mathbb{P}^N$ whose ideal sheaf is $\mathcal{I}_Z : \mathcal{I}_H$.
\end{definition}

\begin{lemma}\label{uu5}
Take $Y = (\PP^2)^2$. For every $p\in \Gamma '_{2,0}$ we have $r_X({p}) >2$ (cf. Notation \ref{star}).
\end{lemma}

\begin{proof}
Assume the existence of a set $B\subset Y$ such that $\sharp (B)\le 2$ and $p\in \langle \nu_{2^{(2)}}(B)\rangle$. Since $B\in \Ss ({p})$, we have $p\notin \langle \nu_{2^{(2)}}(B')\rangle$ for
any $B'\subsetneq B$. Take $(f,o)\in \Delta _{2,0}$ such that $p\in \langle \nu_{2^{(2)}}(f(3o))\rangle$
and $p\notin \langle f(2o)\rangle$. Set $A:= f(3o)$. By assumption we have $p\notin \langle \nu_{2^{(2)}}(A')\rangle$ for any $A'\subsetneq A$. In particular $B \ne \{o\}$. By \cite[Lemma 1]{bb} we have $h^1(\Ii _{A\cup B}(1,1)) >0$. Since $\sharp (B)\le 2$, there is a line $R\subset \PP^2$ such that $\pi _1(B)\subset
R$. Set $H:= R\times \PP^2 \in |\Oo _Y(1,0)|$ and call $\nu ': H\to \PP^5$ the Segre embedding of $H$.
We have $\mathrm{Res} _H(A\cup B) \subseteq A$. Since $\pi _2(A)$ spans $\PP^2$ by the definition of $\Gamma _{2,0}$, $\pi _{2|\mathrm{Res} _H(A\cup B)}$ is an embedding
and  $\pi _2(A\cup B)$ is linearly independent. The residual exact sequence of $H$ in $Y$ gives $h^1(H,\Ii _{H\cap (A\cup B),H}(1,1)) >0$. Hence $\langle \nu '(H\cap A)\rangle
\cap \langle \nu '(H\cap B)\rangle \ne \emptyset$. Since $\pi _1(A)$ spans $\PP^2$, we have $A\nsubseteq H$. Thus $H\cap A \subsetneq A$. By the definition of
$\Gamma '_{2,0}$ we have $p\notin \langle \nu_{2^{(2)}}(H\cap A)\rangle$. Set $J: = \langle
\nu_{2^{(2)}}(A)\rangle \cap \nu_{2^{(2)}}(Y)$. Since the only linear subspaces of $\nu_{2^{(2)}}(Y)$ are the ones contained in a ruling of $Y$ and $(f,o)\in
\Delta _{2,0}$, the plane
$\langle\nu_{2^{(2)}}(A)\rangle$ is not contained in $\nu_{2^{(2)}}(Y)$. Hence $J\nsubseteq \langle \nu_{2^{(2)}}(A)\rangle$. Since $\nu_{2^{(2)}}(Y)$ is
scheme-theoretically cut out by quadrics, $J$ is cut out by plane conics. Write $J=\nu_{2^{(2)}}(I)$ with $I\subset Y$. $J$ is not a
reducible conic or a double line or a line, because $\pi _i(A)$ spans $\PP^2$, $i=1,2$, while all linear subspaces of $\nu_{2^{(2)}}
(Y)$ are contained in a ruling of $Y$.
If $J$ were a smooth conic we would have that either
$\pi _1(I)$ spans
$\PP^2$ and $\pi _2(I)$ is a point, or $\pi _2(I)$ spans $\PP^2$ and $\pi _1(I)$ is a point or $\pi _1(I)$ and $\pi _2(I)$
are lines, contradicting the assumption that each $\pi _i(A)$ spans $\PP^2$. Thus $J$ is a zero-dimensional scheme of
degree $\le 4$. Since $A\cup B\subseteq I$, we get that either $B = \{o\}$ (and we excluded this case) or $B= \{o,q\}$ for
some $q\in A$ with $q\ne o$. Thus $\deg
(A\cup B) =4$. We have
$h^1(\Ii _{A\cup B}(1,1))
\ne 0$ (\cite[Lemma 1]{bb}).  Since $p$ has not rank $2$ with respect to $\nu_{2^{(2)}}(C)$, we have $q\notin C$. Thus there
is $M\in |\Oo_Y(1,1)|$ with $M\supset C$ and $q\notin M$. Thus $M\cap (A\cup B) =A$ and $\mathrm{Res} _M(A\cup B) =\{q\}$. Thus
$h^1(\Ii_Q)=0$. Since
$h^1(\Ii _A(1,1)) =0$, the residual exact sequence of $M$ in $Y$ gives a contradiction. 
\end{proof}

\begin{lemma}\label{uu3}
Fix integers $a \ge 0$ and $b\ge 0$ with $a+b \ge 3$. We have $r_X({p}) =2a+b-1$ for a general $p\in \Gamma _{a,b}$ (cf. Notation \ref{star}).
\end{lemma}

\begin{proof}
The case $a=0$ is true by Proposition \ref{d1}. The case $b=0$ is true by Lemma \ref{uu2}. Thus we may assume that $a>0$ and $b>0$. Set $k:= a+b$. Take $(f,o)\in \Delta _{a,b}$ such that $p$ is a general element of $\langle \nu_{2^{(a)},1^{(b)}} (A)\rangle$ with $A:= 3o$. Set $C:= f(\PP^1)$, $f_i:= \pi _i\circ f$ and $o_i:= \pi _i(f(o))$. Since $\nu_{2^{(a)},1^{(b)}} ({C})$ is a degree $2a+b$ rational normal
curve in its linear span and $2a+b \ge 4$, Sylvester's theorem gives $r_{\nu_{2^{(a)},1^{(b)}} ({C})} =2a+b-1$. Thus $r_X({p}) \le 2a+b-1$. Assume $r_X({p}) \le 2a+b-2$ and take
$B\in \Ss ({p})$. By Autarky we
have $B\nsubseteq (\PP^2)^a\times (\PP^1)^{b-1}\times \{o_k\}$. Take $z\in B$ such that $b_k:= \pi _k(z)\ne o_k$.
Set $Y':=(\PP^2)^a\times (\PP^1)^{b-1}\times \{b_k\}$. Let $\nu_{2^{(a)},1^{(b-1)}}:= (\PP^2)^a\times (\PP^1)^{b-1}\to \PP^s$, $s:= -1 +3^a2^{b-1}$,
be the Segre embedding of $(\PP^2)^a\times (\PP^1)^{b-1}$. Set $E:= \langle \nu_{2^{(a)},1^{(b)}} (Y')\rangle$. We have $\dim E+1 = 2\cdot 3^2$.
Let $\ell : \PP^M\setminus E\to \PP^s$ the linear projection from $E$. Set $A':= \tau_k(A)$ (as in Notation \ref{Setting}).  As in the proof of Lemma \ref{uu2}
we get $E\cap \langle \nu_{2^{(a)},1^{(b)}} (A)\rangle =\emptyset$, $\nu_{2^{(a)},1^{(b-1)}} (A')=\ell (A)$, $p':= \ell ({p})$ is a general element of $\langle \nu_{2^{(a)},1^{(b-1)}}
(A')\rangle$. 

\begin{enumerate}[(a)]

\item  Assume $(a,b) =(1,2)$.  Since $\nu_{2^1,1^2} (Y)\nsubseteq \Gamma _{1,2}$, $p$ is general in $\Gamma _{1,2}$ and $\sharp (B) \le 2$, we have $\sharp (B)=2$.
Thus $\sharp (A')=1$ and so $p' \in \nu_{2^{(1)},1^{(1)}} (\PP^2\times \PP^1)$. Hence a general element of $\Gamma _{1,1}$ has rank $1$, contradicting Lemma \ref{uu4}.

\item Assume $(a,b) =(2,1)$. We use Lemma \ref{uu5}.

\item By the previous steps we may assume $a+b\ge 4$, $a>0$, $b>0$ and use induction on the integer $a+b$. (and hence by the inductive assumption applied to $(a,b-1)$ it has rank $2a+b-2$), while $p'\in \langle \nu_{2^{(a)},1^{(b-1)}}(B\setminus B\cap Y')\rangle$
with $\sharp (B\setminus B\cap Y')\le x-2$ (because $b_k\in \pi _k(Y')\cap \pi _k(B)$), a contradiction.
\end{enumerate}
\end{proof}

\begin{proof}[Proof of Theorem \ref{dd1}:]
First assume $x\le k-1$. If $x = 3$, then we may take as $p$ a general point of $\sigma _3(X)$. Now assume $x\ge 4$ and hence $k\ge 5$. Apply Proposition \ref{d1} to $(\PP^1)^{x+1}$ and then
use Autarky (Lemma \ref{autarky}).
Now assume $k\le x \le 2k-1$. For $x=2k-1$ use Lemma \ref{uu2} and Autarky. For each $x\in \{4,\dots ,2k-2\}$ use the case $a = x+1-k$ and $b= k-a$ of Lemma \ref{uu3} and then
apply Autarky.\end{proof}

\begin{remark}\label{uu6}
Take the set-up of Theorem \ref{dd1}. If $n_i\ge 2$ for all $i$, then Theorem \ref{dd1} gives all ranks of points of $\sigma _3(X)\setminus \sigma _2(X)$, but it does not say the rank
of each point of $\sigma _3(X)\setminus \sigma _2(X)$. One problem is that in Lemma \ref{uu2} we do not check all ranks of points of $\Gamma '_{1,1}$. A bigger problem
is that the inductive proof should be adapted and the induction must start. These problems may be not deal-breakers, but there is a class of points
of $\sigma _3(X)\setminus \sigma _2(X)$ (occurring even if $n_i=1$ for some $i$) for which we do not have a good upper bound for the rank (except that $r_X({p})\le 2k-1$). These are the points $p\in \langle \nu_{n_1^1, \ldots , n_k^1} (A)\rangle$ with $A\subset Y$ a connected curvilinear scheme of degree $3$ and $\deg (\pi _i(A)) = 2$ for some $i$, because
in this case $A\nsubseteq C$ with $C\subset Y$ and $\nu_{n_1^1, \ldots , n_k^1} ({C})$ a rational normal curve in its linear span. We have no idea about the rank of these points.
\end{remark}

 \section{Proof of Theorem \ref{i2}}\label{SectionCurvilinear}

 \begin{lemma}\label{c1}
 Fix an integer $c>0$ and $u\in \PP^1$. Let $E =cu \subset \PP^1$ be the degree $c$ effective divisor of $\PP^1$ with $u$ as its support. Let $g: E\to \PP^n$ be any morphism.
 Then there is a non-negative integer  $e\le c$ and a morphism $h: \PP^1\to \PP^n$ such that $h^\ast \left(\Oo _{\PP^n}(1)\right) \cong \Oo _{\PP^1}(e)$ and $h_{|E} =g$.
 \end{lemma}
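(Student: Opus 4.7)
The plan is to reduce the lemma to a polynomial calculation: trivialize $E$ in a local coordinate at $u$, observe that $g$ factors through a standard affine chart of $\PP^n$, and then homogenize the resulting $n$-tuple of polynomials to obtain a morphism $\PP^1\to \PP^n$.

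First I would pick homogeneous coordinates $[x:y]$ on $\PP^1$ with $u=[0{:}1]$, so that $t:=x/y$ is a local parameter at $u$ and $E=\mathrm{Spec}\,K[t]/(t^c)$ sits in the chart $\{y\ne 0\}$. Since $E$ is a local zero-dimensional scheme, its image $g(u)$ is a single closed point of $\PP^n$, so after reordering the homogeneous coordinates of $\PP^n$ I can assume $g(u)$ lies in $U_n:=\{X_n\ne 0\}\cong \AA^n$. In the affine coordinates $X_0/X_n,\dots ,X_{n-1}/X_n$ on $U_n$ the morphism $g$ is specified by $n$ elements $\bar g_i\in K[t]/(t^c)$; choose the unique polynomial representatives $g_i\in K[t]$ of degree $\le c-1$.

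Next I would define $h:\PP^1\to\PP^n$ by
\[ h([x:y])=\bigl[y^{e_0}g_1(x/y):\dots :y^{e_0}g_n(x/y):y^{e_0}\bigr], \]
where $e_0:=\max_i \deg g_i$ (with the convention $e_0=0$ if every $g_i$ vanishes). Each entry is a homogeneous polynomial of degree $e_0$ in $x,y$, so this expression gives $n+1$ sections of $\Oo_{\PP^1}(e_0)$. These sections have no common zero: at $[0{:}1]$ the last one equals $1$, and at $[1{:}0]$ the $i$-th one equals the leading coefficient of $g_i$ whenever $\deg g_i=e_0$, which is nonzero for at least one index by the choice of $e_0$. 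Hence $h$ is a genuine morphism; after cancelling the greatest common divisor $p(x,y)$ of degree $d\ge 0$ of the $n+1$ sections one obtains $h^\ast\Oo_{\PP^n}(1)\cong \Oo_{\PP^1}(e)$ with $e:=e_0-d\le c-1\le c$. Finally, restricting the formula to $E=\mathrm{Spec}\,K[t]/(t^c)$ in the chart $\{y=1\}$ recovers $(g_1(t),\dots ,g_n(t))$ modulo $t^c$, so $h|_E=g$ by construction. There is no real obstacle here: the only subtle point is the potential base point at $[1{:}0]$, which is precisely what the choice $e_0=\max_i\deg g_i$ (possibly followed by removing a common factor of the sections) is designed to handle.
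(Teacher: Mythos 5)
Your proof is correct and follows essentially the same route as the paper's: both lift the data defining $g$ on the thickened point $E$ to global data on $\PP^1$ (you lift the $n$ affine coordinate functions and homogenize; the paper lifts $n+1$ sections of $\Oo _E(c)$ through the surjection $H^0(\Oo _{\PP^1}(c))\to H^0(\Oo _E(c))$ and then removes the base locus). Your normalization via the affine chart, which forces the last section to be $y^{e_0}$, makes base-point-freeness automatic (so the gcd-cancellation step is vacuous) and even yields the slightly sharper bound $e\le c-1$.
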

 
 \begin{proof} Every line bundle on $E$ is trivial. We fix an isomorphism between $g^\ast \left(\Oo _{\PP^n}(1)\right)$ and $\Oo _E({c})$. After this identification, $g$ is induced
 by $n+1$ sections $u_0,\dots ,u_n$ of $\Oo _E({c})$ such that at least one of them has a non-zero restriction at $\{u\}$. The map $H^0\left(\Oo _{\PP^1}({c})\right)\to H^0\left(\Oo _E({c})\right)$
 is surjective and its kernel is the section associated to the divisor $cu$. Hence there are $v_0,\dots ,v_n\in H^0\left(\Oo _{\PP^1}({c})\right)$ with $v_{i|E} =u_i$ for all $i$.
 Not all sections $v_0,\dots ,v_n$ vanish at $0$. If they have no common zero, then they define a morphism $\PP^1\to \PP^n$ extending $g$ and we may take $e=c$.
 Now assume that they have a base locus and call $F$ the scheme-theoretic base locus of the linear system associated to $v_0,\dots ,v_m$. We have $\deg (F) \le c$. Set $e:= c-\deg (F)$ and $S:= F_{\mathrm{red}}$. The sections $v_0,\dots ,v_n$ induce a morphism $f: \PP^1\setminus S\to \PP^n$ with $f_{|E}=g$. See $v_0,\dots ,v_n$ as elements of $|\Oo _{\PP^1}({c})|$
and set $u_i:= u-F\in |\Oo _{\PP^1}(e)|$. By construction the linear system spanned by $u_0,\dots ,u_n$ has no base points, hence it induces a morphism
$h: \PP^1\to \PP^n$ such that $h^\ast \left(\Oo _{\PP^n}(1)\right) \cong \Oo _{\PP^1}(e)$. We have $h_{|\PP^1\setminus S} = f$ and hence $h_{|E} =g$.
 \end{proof}
 
 \begin{proof}[Proof of Theorem \ref{i2}:]  Let $Z\subset {\PP^{n_1}\times \cdots \times \PP ^{n_k}}$ such that $p\in \langle \nu _{n_1^{d_1},\dots ,n_k^{d_k}}(Z)\rangle$ and $Z$ has $Z_1$, $\ldots$ , $Z_\alpha$ connected components,
By assumption there is $p_i\in \langle \nu _{n_1^{d_1},\dots ,n_k^{d_k}}(Z_i)\rangle$ such that $p\in \langle \{p_1,\dots ,p_\alpha\}\rangle$. Note that if Theorem \ref{i2} is true for each $(Z_i,p_i)$, then it is true for $Z$. Hence it is sufficient to prove Theorem \ref{i2} under the additional assumption that
 $Z$ is connected, so from now on we assume
 \begin{itemize} \item $Z$ connected. \end{itemize}
Moreover,  since $r_X({p}) =1 \le 2-1+\sum _i d_i$ if $c=1$, we may  also assume that  
 \begin{itemize} \item
$\deg Z=c\ge 2$.
 \end{itemize}
Finally, since the real-valued function $x\mapsto x\left(-1+\sum _{i=1}^{k}d_i\right)$
 is increasing for $x\ge 1$, with no loss of generality we may assume that, for any $G\subsetneq Z$,
 \begin{itemize} \item
$p\notin \langle \nu _{n_1^{d_1},\dots ,n_k^{d_k}}(G)\rangle$.
 \end{itemize}
 Fix $u\in \PP^1$ and let $E =cu \subset \PP^1$ be the degree $c$ effective divisor of $\PP^1$ with $u$ as its support. Since $Z$ is curvilinear and
 $\deg (Z) =c$, we have $Z\cong E$ as abstract zero-dimensional schemes. Let $g: E\to {\PP^{n_1}\times \cdots \times \PP ^{n_k}}$ be the composition of an isomorphism $E\to Z$ with the inclusion
 $Z\hookrightarrow {\PP^{n_1}\times \cdots \times \PP ^{n_k}}$:
 $$g: E\to Z\hookrightarrow  {\PP^{n_1}\times \cdots \times \PP ^{n_k}}.$$
 Set $g_i:= \pi _i\circ g$. If we apply Lemma \ref{c1} to each $g_i$, we get the existence of an integer $c_i\in \{0,\dots ,c\}$ and of a morphism $h_i: \PP^1\to \PP^{n_i}$ such
 that $h_{i|Z} = g_i$ and ${h_i}^\ast \left(\Oo _{\PP^{n_i}}(1)\right) \cong \Oo _{\PP^1}(c_i)$. The map \begin{equation}\label{h}
h = (h_1,\dots ,h_k):\PP^1 \to {\PP^{n_1}\times \cdots \times \PP ^{n_k}}
\end{equation} 
 has multi-degree $(c_1,\dots ,c_k)$.
 The curve $$D:= h(\PP^1)$$ is an integral rational curve containing $Z$. Since $p\in \langle \nu _{n_1^{d_1},\dots ,n_k^{d_k}}(Z)\rangle$, we have $$r_X({p})\le r_{\nu _{n_1^{d_1},\dots ,n_k^{d_k}}(D)}({p}).$$
 Thus it is sufficient to prove that, if we call $\tilde D:= \nu _{n_1^{d_1},\dots ,n_k^{d_k}}(D)$, then
 $r_{\tilde D}({p}) \le 2+c\left(-1+\sum _{i=1}^{k} d_i\right)$. Since $c_i\le c$ for all $i$, it is sufficient to prove
 that $r_{\tilde D}({p}) \le 2-c+\sum _{i=1}^{k} c_id_i$. 
 
 Set  ${\tilde Z}:= \nu _{n_1^{d_1},\dots ,n_k^{d_k}}(Z)$, $m:= \dim (\langle \tilde D\rangle)$ and 
 $$f = \nu _{n_1^{d_1},\dots ,n_k^{d_k}}\circ h :\PP^1\to\PP^N.$$  
 By assumption ${\tilde Z}$ is linearly independent in $\langle \tilde D\rangle \cong \PP^m$ and
 in particular $c\le m+1$.
 
 \begin{enumerate}[(a)]
 \item\label{a} Assume that the map $h$ defined in \eqref{h} is birational onto its image. The curve $\tilde D \subset \PP^N$ just defined is a rational curve of degree $a:= \sum _{i=1}^{k} c_id_i$ contained in the projective space $\PP^m:=
 \langle \tilde D\rangle$ and non-degenerate in $\PP^m$. Note that $a\ge m$ and that $p\in \langle {\tilde Z}\rangle$.
 
 \begin{enumerate}[(1)]
 \item\label{a1} First assume that $a=m$. In this case $\tilde D$ is a rational normal curve of $\PP^m$. If $c\le \left\lceil (a+1)/2\right\rceil$,
 then Sylvester's theorem implies that $r_{\tilde D}({p}) = a+2-c =2-c+\sum _{i=1}^{k} c_id_i$.  Now assume $c > \left\lceil (a+1)/2\right\rceil$. Since ${\tilde Z}$ is connected and curvilinear and $p\notin \langle G\rangle$ for any $G\subsetneq {\tilde Z}$, Sylvester's theorem implies $r_{\tilde D}({p}) \le c$. 
 
 \item\label{a2} Now assume $m<a$. There is a rational normal curve $C\subset \PP^a$ and a linear subspace $W\subset \PP^a$ such that $\dim (W) =a-m-1$, $C\cap W =\emptyset$
 and $h$ is the composition of the degree $a$ complete embedding $j:= \PP^1\hookrightarrow \PP^a$ and the linear projection $\ell : \PP^a \setminus W\to \PP^m$ from $W$.
 The scheme $E':= j(E)$ is a degree $c$ curvilinear scheme and $\ell$ maps $E'$ isomorphically onto ${\tilde Z}$. Since ${\tilde Z}$ is linearly independent, then $\langle E'\rangle \cap W=\emptyset$ and $\ell$ maps isomorphically $\langle E'\rangle$ onto $\langle {\tilde Z}\rangle$. Thus there is a unique $q\in \langle E'\rangle$ such that $\ell (q) =p$. Take any finite set $S\subset j(\PP^1)$
 with $q\in \langle S\rangle$. Since $C\cap W =\emptyset$, $\ell (S)$ is a well-defined subset of ${\tilde D}$ with cardinality $\le \sharp (S)$. Hence $r_{\tilde D}({p})\le r_C(q)$.
 As in step \eqref{a1} we see that either $r_C(q) =a+2-c$ (case $c \le \left\lceil (a+1)/2\right\rceil$) or $r_C(q) \le c$ (case $c > \left\lceil (a+1)/2\right\rceil$).
 \end{enumerate}
 
 \item\label{b} Now assume that $h$ is not birational onto its image, but it has degree $k\ge 2$. Note that $k$ divides $c_i$ for all $i$. In this case we will prove that $r_{\tilde D}({p}) \le 2-c+\sum _{i=1}^{k} c_id_i/k$. Let $h': \PP^1 \to h(\PP^1)$ denote the normalization map. There is a degree $k$ map $h'': \PP^1\rightarrow \PP^1$ such that
 $h$ is the composition of $h'\circ h''$ and the inclusion $h(\PP^1)\subset {\PP^{n_1}\times \cdots \times \PP ^{n_k}}$. We have $Z = h'(E')$, where $E' =cu'$ and $u' =h''(u)$. We use $E'$ and $h'$ instead of $E$ and $h$ and repeat verbatim step \eqref{a}.
 \end{enumerate} \end{proof}

\section*{Acknowledgements} We want to thank the anonymous referee and the Handling Editor Jan Draisma for their careful jobs that improved the presentation of this paper. A special thank to the referee for her/his  very interesting questions that encouraged us in giving a better version of Theorem \ref{dd1}.

\end{document}